\newtheorem{theorem}{Theorem}[section]
\newtheorem{proposition}[theorem]{Proposition}
\newtheorem{definition}[theorem]{Definition}
\newtheorem{claim}[theorem]{Claim}
\newtheorem{lemma}[theorem]{Lemma}
\newcommand{\qedsymb}{\hfill{\rule{2mm}{2mm}}}
\renewenvironment{proof}[1][]{\begin{trivlist}
\item[\hspace{\labelsep}{\bf\noindent Proof#1:\/}] }{\qedsymb\end{trivlist}}
\def\calC{{\cal C}}
\def\R{\mathbb{R}}
\def\mod{\mbox{mod}}
\def\bp{\mathop{\mathrm{bp}}}
\def\supp{\mathop{\mathrm{supp}}}
\def\disc{\mathop{\mathrm{disc}}}
\def\ent{\mbox{H}_\infty}
\newcommand\Prob[2]{{\Pr_{#1}\left[ {#2} \right]}}
\renewcommand{\epsilon}{\varepsilon}
\newcommand{\rank}{\mathop{\mathrm{rank}}}
\newcommand{\Rbool}{{\rank}_\mathbb{B}}
\newcommand{\Rbin}{{\rank}_{\mathrm{bin}}}
\newcommand{\Rreal}{{\rank}_\mathbb{R}}
\begin{document}

\title{{\bf On the Binary and Boolean Rank of Regular Matrices}
}

\author{
Ishay Haviv\thanks{School of Computer Science, The Academic College of Tel Aviv-Yaffo, Tel Aviv 61083, Israel. Research supported in part by the Israel Science Foundation (grant No.~1218/20).}
\and
Michal Parnas\thanks{School of Computer Science, The Academic College of Tel Aviv-Yaffo, Tel Aviv 61083, Israel. Email address: {\tt michalp@mta.ac.il}}
}

\date{}

\maketitle

\begin{abstract}
A $0,1$ matrix is said to be regular if all of its rows and columns have the same number of ones.
We prove that for infinitely many integers $k$, there exists a square regular $0,1$ matrix with binary rank $k$, such that the Boolean rank of its complement is $k^{\widetilde{\Omega}(\log k)}$.
Equivalently, the ones in the matrix can be partitioned into $k$ combinatorial rectangles, whereas the number of rectangles needed for any cover of its zeros is $k^{\widetilde{\Omega}(\log k)}$.
This settles, in a strong form, a question of Pullman (Linear Algebra Appl.,~1988) and a conjecture of Hefner, Henson, Lundgren, and Maybee (Congr. Numer.,~1990).
The result can be viewed as a regular analogue of a recent result of Balodis, Ben-David, G{\"{o}}{\"{o}}s, Jain, and Kothari (FOCS,~2021), motivated by the clique vs. independent set problem in communication complexity and by the (disproved) Alon-Saks-Seymour conjecture in graph theory.
As an application of the produced regular matrices, we obtain regular counterexamples to the Alon-Saks-Seymour conjecture and prove that for infinitely many integers $k$, there exists a regular graph with biclique partition number $k$ and chromatic number $k^{\widetilde{\Omega}(\log k)}$.
\end{abstract}

\section{Introduction}

For a $0,1$ matrix $M$ of dimensions $n \times m$, consider the following three notions of rank.
\begin{itemize}
  \item The (standard) {\em rank} of $M$ over $\R$, denoted by $\Rreal(M)$, is the minimal $k$ for which there exist real matrices $A$ and $B$ of dimensions $n \times k$ and $k \times m$ respectively, such that $M = A \cdot B$ where the operations are over $\R$.
  \item The {\em binary rank} of $M$, denoted by $\Rbin(M)$, is the minimal $k$ for which there exist $0,1$ matrices $A$ and $B$ of dimensions $n \times k$ and $k \times m$ respectively, such that $M = A \cdot B$ where the operations are over $\R$. Equivalently, $\Rbin(M)$ is the smallest number of monochromatic combinatorial rectangles\footnote{A {\em (combinatorial) rectangle} in a matrix $M$ is a set $P \times Q$, where $P$ and $Q$ are sets of rows and columns in $M$ respectively. The rectangle is said to be {\em monochromatic} if the entries of the corresponding submatrix of $M$ are all equal.} in a {\em partition} of the ones in $M$.
  \item The {\em Boolean rank} of $M$, denoted by $\Rbool(M)$, is the minimal $k$ for which there exist $0,1$ matrices $A$ and $B$ of dimensions $n \times k$ and $k \times m$ respectively, such that $M = A \cdot B$ where the operations are under Boolean arithmetic (namely, $0+x=x+0=x$, $1+1=1 \cdot 1 = 1$, and $x \cdot 0 = 0 \cdot x = 0$). Equivalently, $\Rbool(M)$ is the smallest number of monochromatic combinatorial rectangles in a {\em cover} of the ones in $M$.
\end{itemize}
Note that the binary rank and the Boolean rank are sometimes referred to in the literature as the $1$-partition number and the $1$-cover number respectively.
Note further that every $0,1$ matrix $M$ satisfies $\Rbin(M) \geq \Rreal(M)$ and $\Rbin(M) \geq \Rbool(M)$.

The above notions of rank play a central role in the area of communication complexity, introduced in 1979 by Yao~\cite{Yao79}.
In the communication problem associated with a $0,1$ matrix $M$ of dimensions $n \times m$, one player holds a row index $i \in [n]$ and another player holds a column index $j \in [m]$, and their goal is to decide whether $M_{i,j}=1$ while minimizing the worst-case number of communicated bits.
For the deterministic setting, the well-known log-rank conjecture of Lov{\'a}sz and Saks~\cite{LovaszS88} suggests that the communication complexity of the problem is polynomially related to $\log_2 \Rreal(M)$ (see, e.g.,~\cite{LovettA14}).
For the non-deterministic setting, it is not difficult to see that the minimum number of bits that should be communicated is precisely $\lceil \log_2 \Rbool(M) \rceil$.
For the {\em unambiguous} non-deterministic setting, where each input is required to have at most one accepting computation, the minimum number of bits that should be communicated is precisely $\lceil \log_2 \Rbin(M) \rceil$.

For a $0,1$ matrix $M$, let $\overline{M}$ denote the complement matrix obtained from $M$ by replacing the ones by zeros and the zeros by ones.
A result of Yannakakis~\cite{Yannakakis91} implies that every $0,1$ matrix $M$ with $\Rbin(M) = k$ satisfies
\begin{eqnarray}\label{eq:M_upper}
\Rbool( \overline{M}) \leq \Rbin( \overline{M}) \leq k^{O(\log k)}.
\end{eqnarray}
The challenge of determining the largest possible value of $\Rbool( \overline{M})$ for a $0,1$ matrix $M$ with $\Rbin(M) = k$ has attracted intensive attention in the literature, mostly with the equivalent formulation of the clique vs. independent set problem introduced in~\cite{Yannakakis91} (see~\cite[Chapter~4.4]{JuknaBook}).
The first non-trivial bound was given by Huang and Sudakov~\cite{HuangS12} who provided, building on a construction of Razborov~\cite{Razborov92}, a family of such matrices $M$ satisfying $\Rbool( \overline{M}) \geq \Omega(k^{6/5})$ (see~\cite{CioabaT11} for extended constructions).
The constant $6/5$ in the exponent was improved to $3/2$ by Amano~\cite{Amano14} and then to $2$ by Shigeta and Amano~\cite{ShigetaA15}.
The first super-polynomial separation was obtained by G{\"{o}}{\"{o}}s~\cite{Goos15}, who provided a family of such matrices $M$ satisfying $\Rbool( \overline{M}) \geq k^{\Omega(\log^{0.128} k)}$. This was then improved in a work of Ben-David, Hatami, and Tal~\cite{BHT17} to $\Rbool( \overline{M}) \geq k^{\Omega(\log^{0.22} k)}$. In a recent breakthrough, it was shown by Balodis, Ben-David, G{\"{o}}{\"{o}}s, Jain, and Kothari~\cite{BBGJK21} that the bound can be further improved to $\Rbool( \overline{M}) \geq k^{\widetilde{\Omega}(\log k)}$, which matches the upper bound in~\eqref{eq:M_upper} up to $\log \log k$ factors hidden in the $\widetilde{\Omega}$ notation.
Note that the result of~\cite{BBGJK21} strengthens an earlier result of G{\"{o}}{\"{o}}s, Pitassi, and Watson~\cite{GoosP018}, who provided a near optimal separation between the binary rank of a $0,1$ matrix and the deterministic communication complexity of the problem associated with it.

Interestingly, the above problem is closely related to a graph-theoretic problem proposed by Alon, Saks, and Seymour in 1991 (see~\cite{Kahn91}).
For a graph $G$, let $\chi(G)$ denote its chromatic number, and let $\bp(G)$ denote its biclique partition number, that is, the smallest number of edge-disjoint bicliques (i.e., complete bipartite graphs) needed for a partition of the edge set of $G$.
A classic result of Graham and Pollak~\cite{GrahamP71} asserts that the complete graph $K_n$ on $n$ vertices satisfies $\bp(K_n) = n-1$. Inspired by this result, Alon, Saks, and Seymour conjectured that every graph $G$ satisfies $\bp(G) \geq \chi(G) - 1$. The conjecture was disproved by Huang and Sudakov in~\cite{HuangS12}, where it was shown that for infinitely many integers $k$ there exists a graph $G$ satisfying $\bp(G)=k$ and $\chi(G) \geq \Omega(k^{6/5})$. These graphs were used there to derive the aforementioned separation between $\Rbin(M)$ and $\Rbool( \overline{M})$ for $0,1$ matrices $M$ (see~\cite[Section~4]{HuangS12}).
In a work of Bousquet, Lagoutte, and Thomass{\'{e}}~\cite{BousquetLT14}, the two problems were shown to be essentially equivalent, allowing the authors of~\cite{BBGJK21} to derive, for infinitely many integers $k$, the existence of a graph $G$ satisfying $\bp(G)=k$ and $\chi(G) \geq k^{\widetilde{\Omega}(\log k)}$.
As in the matrix setting, the gap is optimal up to $\log \log k$ factors in the exponent.

A $0,1$ matrix $M$ is said to be {\em $d$-regular} if every row and every column in $M$ has precisely $d$ ones.
In 1986, Brualdi, Manber, and Ross~\cite{BrualdiMR86} proved that for every $d$-regular $0,1$ matrix $M$ of dimensions $n \times n$ where $0 < d < n$, the rank of $M$ over the reals is equal to that of its complement, that is, $\Rreal(M) = \Rreal(\overline{M})$.
Following their work, Pullman~\cite{Pullman88} asked in 1988 whether every such matrix $M$ satisfies $\Rbin(M) = \Rbin(\overline{M})$. In 1990, Hefner, Henson, Lundgren, and Maybee~\cite{HefnerHLM90} conjectured that the answer to this question is negative (see~\cite[Conjecture~3.2]{HefnerHLM90}).
The question was asked again in 1995 in a survey by Monson, Pullman, and Rees~\cite{MonsonPR95survey} (see~\cite[Open problem~7.1]{MonsonPR95survey}).\footnote{The question of~\cite{Pullman88,HefnerHLM90,MonsonPR95survey} was originally formulated using the notion of {\em non-negative integer rank}, which coincides with the binary rank for $0,1$ matrices (see, e.g.,~\cite[Lemma~2.1]{GregoryPJL91}).}
Note that for the Boolean rank, such a statement does not hold in general. For example, the $1$-regular identity matrix $I_n$ satisfies $\Rbool(I_n) = n$ and yet $\Rbool(\overline{I_n}) = (1+o(1)) \cdot \log_2 n$ (see~\cite{deCaenGP81}).

\subsection{Our Contribution}

The current work aims to determine the largest possible gap between the binary rank of {\em regular} $0,1$ matrices and the Boolean rank of their complement.
Our main result is the following.

\begin{theorem}\label{thm:main}
For infinitely many integers $k$, there exists a square regular $0,1$ matrix $M$ satisfying
\[\Rbin(M) = k \mbox{~~~and~~~} \Rbool(\overline{M}) \geq k^{\widetilde{\Omega}(\log k)}.\]
\end{theorem}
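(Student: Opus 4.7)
I plan to reduce to the non-regular matrix separation of Balodis, Ben-David, G{\"o}{\"o}s, Jain, and Kothari~\cite{BBGJK21}. Let $M_0$ be their $n\times n$ matrix with $\Rbin(M_0)=k_0$ and $\Rbool(\overline{M_0})\geq k_0^{\widetilde{\Omega}(\log k_0)}$, with row sums $r_i$, column sums $c_j$, and set $d=\max_{i,j}\{r_i,c_j\}$. By~\eqref{eq:M_upper} one may take $n\leq k_0^{O(\log k_0)}$, so $\log n=O(\log^2 k_0)$.

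The plan is to embed $M_0$ as the top-left block of a $d$-regular matrix
\[
M=\begin{pmatrix} M_0 & A \\ B & C \end{pmatrix}
\]
of dimensions $(n+d)\times(n+d)$, where $A$, $B$, and $C$ are filled in so that every row and column of $M$ has exactly $d$ ones. Concretely, row $i$ of $A$ must contain $d-r_i$ ones and column $j$ of $B$ must contain $d-c_j$ ones, while $C$ absorbs the residual sums. Because $\overline{M_0}$ is a submatrix of $\overline{M}$, we obtain $\Rbool(\overline{M})\geq \Rbool(\overline{M_0})\geq k_0^{\widetilde{\Omega}(\log k_0)}$ for free; the technical content is in bounding $\Rbin(M)$.

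For this I would use a \emph{binary-expansion} padding. Split the $d$ columns of $A$ into $\lceil\log_2 d\rceil$ groups, with group $b$ containing $2^b$ columns, and declare group $b$ to be all-ones in row $i$ iff bit $b$ of $d-r_i$ equals $1$. Then row $i$ of $A$ has $d-r_i$ ones, and the ones of $A$ partition into one monochromatic rectangle per bit, so $\Rbin(A)=O(\log d)$; symmetrically $\Rbin(B)=O(\log d)$. The row and column sums of $C$ are then determined by bit-profile sums of the $r_i$ and $c_j$, and a nested block decomposition of $C$ (using an analogous binary expansion in both coordinates) yields $\Rbin(C)=O(\log^2 d)$. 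Combining the partitions,
\[
\Rbin(M)\leq \Rbin(M_0)+\Rbin(A)+\Rbin(B)+\Rbin(C)\leq k_0+O(\log^2 d)=k_0+O(\log^4 k_0),
\]
which is $k_0(1+o(1))$. Setting $k=\Rbin(M)$, we get $k_0\geq k(1-o(1))$, hence $\Rbool(\overline{M})\geq k_0^{\widetilde{\Omega}(\log k_0)}\geq k^{\widetilde{\Omega}(\log k)}$, which gives the theorem.

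\textbf{Main obstacle.} The most delicate step is the corner block $C$. Its required row and column sums are dictated by the bit-profiles of the $r_i$'s and $c_j$'s, so showing that such a $0,1$ matrix exists (a Gale--Ryser-style compatibility check on the induced marginals) and that it admits a partition into only $O(\log^2 d)$ monochromatic rectangles is the crux of the argument. A natural attempt is to structure $C$ as a $\log d\times\log d$ grid of all-ones/all-zeros blocks indexed by bit pairs $(b,b')$, with each block's status set by the binary expansion of a target sum; making all four sum-constraints on the $2\times 2$ block decomposition of $M$ simultaneously consistent may further require a mild preprocessing of $M_0$ (for instance, padding with dummy rows/columns of sum $0$ or $d$) to tame its row/column sum distribution before applying the binary-expansion construction.
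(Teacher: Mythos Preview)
Your approach is genuinely different from the paper's. The paper does not regularize the matrix of~\cite{BBGJK21} after the fact; instead it changes the gadget in the lifting construction. It replaces the inner-product gadget by the ``strongly unbiased'' gadget $g_\ell(x,y)=x_1+y_1+\sum_{i\geq 2}x_iy_i\pmod 2$, whose own matrix is $2^{\ell-1}$-regular, and observes (Lemma~\ref{lemma:regular}) that for \emph{any} $f$ the matrix of $f\circ g_\ell^n$ is then automatically regular. The separation is obtained by extending the lifting theorem of~\cite{GoosLMWZ16} to arbitrary low-discrepancy gadgets (Theorem~\ref{thm:lift}) and checking that $g_\ell$ has discrepancy $2^{-\Omega(\ell)}$ (Lemma~\ref{lemma:disc}). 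So regularity is built in rather than retrofitted, and there is no corner block to worry about.

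Your padding plan, by contrast, has a real gap at exactly the place you flag. Once $A$ and $B$ are fixed by binary expansion, both the row sums and the column sums of $C$ are forced, and your proposed block-structured $C$ (block $(b',b)$ on iff bit $b$ of $d-b_{b'}$ is $1$) satisfies the row constraints by construction but has no reason to satisfy the column constraints. A tiny example already breaks it: take $n=d=3$ and $M_0$ the upper-triangular all-ones matrix, so $r=c=(3,2,1)$ and $n-r_i=(0,1,2)$; then $a_0=a_1=1$, so every column of $C$ must have sum $2$, and by symmetry every row of $C$ must have sum $2$. Your rule turns on blocks $(0,1)$ and $(1,1)$ only, giving column sums $(0,3,3)$ instead of $(2,2,2)$. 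Worse, with $d=\max_{i,j}\{r_i,c_j\}$ as you first propose, the row sums of $B$ are counts over $[n]$ and can exceed $d$, forcing negative target row sums on $C$; and even after setting $d=n$, the Gale--Ryser condition for $C$ can fail outright (e.g.\ $n=3$, $r=c=(2,2,2)$ forces $C$ to have row and column sums $(0,3,3)$, which no $3\times 3$ $0,1$ matrix achieves). So the ``mild preprocessing'' would have to do substantial work that is nowhere indicated. A minor additional point: \eqref{eq:M_upper} bounds $\Rbin(\overline{M_0})$, not the dimension $n$; from $\Rbin(M_0)=k_0$ you only get $n\le 2^{k_0}$ after removing duplicate rows and columns (consider the $2^{k_0}\times k_0$ matrix whose rows are all binary strings), so $\log d$ can be as large as $k_0$ rather than $O(\log^2 k_0)$. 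This last point is not fatal---a polynomial loss in $\Rbin$ would still yield the theorem---but the corner-block issue is.
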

\noindent
Theorem~\ref{thm:main} can be viewed as a regular analogue of the aforementioned result of Balodis et al.~\cite{BBGJK21}, showing that their near optimal separation between $\Rbin(M)$ and $\Rbool(\overline{M})$, which is achieved by irregular matrices $M$, can also be attained by regular ones.
Since every $0,1$ matrix $M$ satisfies $\Rbin(\overline{M}) \geq \Rbool(\overline{M})$, Theorem~\ref{thm:main} settles, in a strong form, the question of Pullman asked in~\cite{Pullman88,MonsonPR95survey} (and the variants of the question mentioned there) and confirms the conjecture of Hefner et al.~\cite{HefnerHLM90}.
We remark that regular matrices $M$ with $\Rbool(\overline{M})$ larger than $\Rbin(M)$ can also be derived from~\cite{HuangS12} (see Section~\ref{sec:overview} for details). While these matrices are sufficient to answer the original question of~\cite{Pullman88,MonsonPR95survey}, they only achieve a polynomial gap between the quantities.

The proof of Theorem~\ref{thm:main} relies on a modification of the construction of~\cite{BBGJK21} to the regular setting.
It involves an extension of the query-to-communication lifting theorem in non-deterministic communication complexity proved by G{\"{o}}{\"{o}}s, Lovett, Meka, Watson, and Zuckerman~\cite{GoosLMWZ16}, as well as a two-source extractor studied by Bouda, Pivoluska, and Plesch~\cite{BoudaPP12} and by Kothari, Meka, and Raghavendra~\cite{KothariMR17}. For an overview of the proof, see Section~\ref{sec:overview}.

As alluded to before, matrices $M$ with $\Rbin(M)$ much smaller than $\Rbool(\overline{M})$ are known to imply graphs $G$ with $\bp(G)$ much smaller than $\chi(G)$,
and thus yield counterexamples to the Alon-Saks-Seymour conjecture (see~\cite{BousquetLT14}).
Although the conjecture is false in general, it is of interest to identify classes of graphs that satisfy a polynomial version of the conjecture.
In particular, it was asked in~\cite{BBGJK21} whether the chromatic number of perfect graphs is polynomially upper bounded in terms of their biclique partition number (see~\cite{Yannakakis91} for a related question; see also~\cite{LagoutteT16,BousquetLMP18,CS21}).
As an application of Theorem~\ref{thm:main}, we show that this is not the case for the class of regular graphs.
Namely, we show that the near optimal separation achieved in~\cite{BBGJK21} between the biclique partition number and the chromatic number can also be attained by regular graphs.

\begin{theorem}\label{thm:ASS-regular}
For infinitely many integers $k$, there exists a simple regular graph $G$ satisfying
\[{\bp}(G) = k \mbox{~~~and~~~} \chi(G) \geq k^{\widetilde{\Omega}(\log k)}.\]
\end{theorem}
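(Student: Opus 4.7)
The plan is to derive Theorem~\ref{thm:ASS-regular} from Theorem~\ref{thm:main} by combining the latter with a matrix-to-graph reduction. A reduction of Bousquet, Lagoutte, and Thomass{\'{e}}~\cite{BousquetLT14}, further refined in~\cite{BBGJK21}, converts a $0,1$ matrix $M$ with $\Rbin(M)$ much smaller than $\Rbool(\overline{M})$ into a graph $G_M$ with $\bp(G_M)$ much smaller than $\chi(G_M)$. Applied to the regular matrix from Theorem~\ref{thm:main}, this already yields a graph with the desired near optimal separation, so the main task is to verify that the construction can be carried out while preserving regularity.

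Concretely, let $M$ be the $d$-regular square matrix of size $n \times n$ supplied by Theorem~\ref{thm:main}, with $\Rbin(M) = k$ and $\Rbool(\overline{M}) \geq k^{\widetilde{\Omega}(\log k)}$. The graph $G_M$ produced by the reduction has a vertex set tied to the rows and columns of $M$ (together with possible auxiliary vertices), and its edges encode the $1$-entries of $M$ augmented by further gadget edges that propagate any small cover of $\overline{M}$ into a proper coloring of $G_M$. The biclique partition number is controlled by the $k$-rectangle partition of $M$, giving $\bp(G_M) \leq \poly(k)$, and the chromatic number is lower bounded by the standard argument that a $c$-coloring of $G_M$ yields a cover of the $0$s of $M$ by $\poly(c)$ rectangles, so that $\poly(\chi(G_M)) \geq \Rbool(\overline{M})$ and hence $\chi(G_M) \geq k^{\widetilde{\Omega}(\log k)}$.

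For regularity, I would exploit the fact that both $M$ and $\overline{M}$ are regular, of degrees $d$ and $n-d$ respectively. The bipartite part of $G_M$ encoding the $1$-entries of $M$ is automatically $d$-regular, and the bipartite part encoding the $0$-entries is $(n-d)$-regular. The gadget edges added by the reduction can be designed symmetrically with respect to the row--column involution of $M$, and any residual imbalance in the vertex degrees can be corrected by attaching a uniform set of auxiliary edges (or by taking a product with a regular graph) without affecting the biclique partition number or the chromatic number up to constant factors.

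The main obstacle is this last step: devising a regularity-preserving variant of the matrix-to-graph reduction, where each piece of the construction contributes the same degree to every vertex. A safe backup, should a direct symmetric construction be delicate, is to first produce a (possibly irregular) witness graph via the standard reduction and then regularize it by a graph product, such as a tensor product with a regular gadget or a bipartite-double-cover-style lift, which preserves both $\bp$ and $\chi$ up to constant factors and yields the required regular counterexample.
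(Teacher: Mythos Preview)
Your proposal is a plan rather than a proof, and the plan has a genuine gap precisely at the step you yourself flag as the ``main obstacle''. The paper's argument (Theorem~\ref{thm:M->G}) shows that turning a regular matrix $M$ into a \emph{simple regular} graph $G$ with small $\bp(G)$ and large $\chi(G)$ is the entire content of Theorem~\ref{thm:ASS-regular}; it is not a cosmetic adjustment. As the paper notes in Section~\ref{sec:overview}, the natural graph coming out of the Bousquet--Lagoutte--Thomass{\'e} reduction has loops (so is not simple), and the subgraph on which one actually bounds $\bp$ and $\chi$ is a carefully chosen induced subgraph that need not inherit regularity even when the ambient graph does. The paper handles this with a two-case construction on the vertex set $[n]\times[n]$, using a $2$-biclique covering of an intermediate graph $H$ and a doubling or tripling of the vertex set in the two cases, with the $d$-regularity of $M$ invoked explicitly to compute degrees (e.g., every vertex ends up with degree $d^2$ in Case~1).

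Your sketch does not supply any of this, and your ``safe backup'' in fact fails. A bipartite double cover (the tensor product $G\times K_2$) is bipartite, so its chromatic number collapses to $2$ regardless of $\chi(G)$; more generally, tensor products do not preserve $\chi$ up to constant factors. Cartesian products do preserve $\chi$, but $G\,\square\,H$ is regular only if $G$ already is, so they cannot regularize an irregular $G$. ``Attaching a uniform set of auxiliary edges'' is likewise not innocuous for $\bp$: you would need to cover those new edges by bicliques without interacting with the existing partition, and there is no reason the number of bicliques required stays $\poly(k)$. Finally, your description of the reduction as having a ``bipartite part encoding the $1$-entries of $M$'' that is ``automatically $d$-regular'' does not match the actual construction, whose vertex set is $[n]\times[n]$ (pairs of indices), not a bipartition into rows and columns; a row/column bipartite graph would have $\chi\le 2$ and would be useless here.
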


\subsection{Overview of Proofs}\label{sec:overview}

Our goal is to obtain regular $0,1$ matrices $M$ for which the binary rank of $M$ is much smaller than the Boolean rank of $\overline{M}$.
We first observe that a polynomial gap between the two quantities, for a regular matrix, can be derived from a construction of Huang and Sudakov~\cite{HuangS12}.
Indeed, it can be verified that the (simple) graphs $G$ given in~\cite{HuangS12}, which satisfy $\bp(G) = k$ and $\chi(G) \geq \Omega(k^{6/5})$, are regular, hence their adjacency matrices are regular as well. The following simple claim implies that these adjacency matrices achieve a polynomial gap between the binary rank and the Boolean rank of the complement.
\begin{claim}
For every simple graph $G$, the adjacency matrix $M$ of $G$ satisfies
\[\Rbin(M) \leq 2 \cdot \bp(G)~~~\mbox{and}~~~\Rbool(\overline{M}) \geq \chi(G).\]
\end{claim}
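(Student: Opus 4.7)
The plan is to prove both inequalities by direct translation between the graph-theoretic objects (bicliques, colorings) and the corresponding matrix-theoretic objects (rectangles, covers), exploiting the fact that the adjacency matrix $M$ of a simple graph is symmetric with an all-zero diagonal (so $\overline{M}$ is symmetric with an all-one diagonal).

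For the first inequality $\Rbin(M) \leq 2\cdot \bp(G)$, I start from an optimal biclique partition $B_1,\ldots,B_k$ of $E(G)$, where each biclique $B_i$ has bipartition $(L_i,R_i)$ with $L_i\cap R_i=\emptyset$ (this disjointness is automatic since $G$ has no self-loops). For each $i$, I form the two rectangles $L_i\times R_i$ and $R_i\times L_i$ in $M$. Each such rectangle is monochromatic: any entry in $L_i\times R_i$ corresponds to an edge of $B_i\subseteq E(G)$, and similarly for $R_i\times L_i$ by the symmetry of $M$. Because the bicliques partition $E(G)$ and because for each edge $\{u,v\}$ the two entries $M_{u,v}$ and $M_{v,u}$ land in the two symmetric rectangles associated with the unique biclique containing that edge, the $2k$ rectangles form a partition of the ones of $M$. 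Hence $\Rbin(M)\leq 2k$.

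For the second inequality $\Rbool(\overline{M})\geq \chi(G)$, I start from an optimal rectangle cover $P_1\times Q_1,\ldots,P_k\times Q_k$ of the ones of $\overline{M}$. Since the diagonal entries of $\overline{M}$ are all $1$, every vertex $u$ lies in $P_i\cap Q_i$ for at least one index $i$, and I define $c(u)$ to be any such $i$. This yields a coloring with at most $k$ colors, and I claim it is proper: if $c(u)=c(v)=i$ with $u\neq v$, then $u,v\in P_i\cap Q_i$, so $(u,v)\in P_i\times Q_i$, forcing $\overline{M}_{u,v}=1$ and hence $uv\notin E(G)$. Therefore $\chi(G)\leq k=\Rbool(\overline{M})$.

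There is no real obstacle to overcome here; the claim is essentially a bookkeeping exercise, with the only subtle points being (i) the factor of $2$ in the first bound, which arises from the symmetry of $M$ and the need to account for both orderings of each edge when partitioning the ones, and (ii) the use of the diagonal of $\overline{M}$ to seed the coloring in the second bound, which is what ties the cover structure of $\overline{M}$ to a vertex coloring of $G$ rather than only to an edge cover of $\overline{G}$.
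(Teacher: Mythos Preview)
Your proof is correct and follows essentially the same approach as the paper's. The paper phrases the second inequality in terms of independent sets $C_i = A_i \cap B_i$ covering the vertex set rather than a coloring function, but this is the same argument.
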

\begin{proof}
For a simple graph $G$ on the vertex set $[n]$, put $k = \bp(G)$, and let $(A_1,B_1), \ldots, (A_k,B_k)$ be the $k$ bipartitions of the $k$ edge-disjoint bicliques that form a partition of the edge set of $G$. Observe that for every $i \in [k]$, the sets $A_i \times B_i$ and $B_i \times A_i$ form combinatorial rectangles of ones in the adjacency matrix $M$ of $G$, and that these $2k$ rectangles form a partition of the ones in $M$, hence $\Rbin(M) \leq 2 \cdot k$.

Next, put $m = \Rbool(\overline{M})$, and let $A_1 \times B_1, \ldots, A_m \times B_m$ be $m$ combinatorial rectangles that form a cover of the ones in $\overline{M}$, i.e., the zeros in $M$. For every $i \in [m]$, let $C_i$ denote the set of elements $j \in [n]$ satisfying $(j,j) \in A_i \times B_i$. Since $G$ is simple, the elements on the diagonal of $M$ are all zeros, hence the sets $C_i$ for $i \in [m]$ cover all vertices of $G$. Since $A_i \times B_i$ is a rectangle of zeros in $M$,  it also follows that $C_i$ is an independent set in $G$. This implies that $m \geq \chi(G)$, and we are done.
\end{proof}

The matrices $M$ that are known to achieve super-polynomial separations between $\Rbin(M)$ and $\Rbool(\overline{M})$, however, are apparently far from being regular~\cite{Goos15,BHT17,BBGJK21}.
Their constructions rely on a powerful technique, known as {\em query-to-communication lifting}, that enables to deduce separation results in communication complexity from separation results in the more approachable area of query complexity.
The proofs of the separation results of~\cite{Goos15,BHT17,BBGJK21} involve two main steps, as described below.

In the first step, one provides a family of Boolean functions $f: \{0,1\}^n \rightarrow \{0,1\}$ with a large gap between two certain measures of Boolean functions, namely, the unambiguous $1$-certificate complexity of $f$ and the $0$-certificate complexity of $f$ (see Section~\ref{sec:query_def}).
These measures can be viewed as query complexity analogues of the binary rank of a matrix and the Boolean rank of its complement.
It is shown in~\cite{BBGJK21} that the gap between the two measures can be nearly quadratic.

In the second step, the separation is ``lifted'' from query complexity to communication complexity. This is done by considering, for some gadget function $g: \{0,1\}^\ell \times \{0,1\}^\ell \rightarrow \{0,1\}$, the communication problem in which two players get inputs from $\{0,1\}^{\ell \cdot n}$ and aim to determine the value of the composed function $f \circ g^n : \{0,1\}^{\ell \cdot n} \times \{0,1\}^{\ell \cdot n} \rightarrow \{0,1\}$, defined by
\[ (f \circ g^n) (x,y) = f(g(x_1,y_1), g(x_2,y_2), \ldots, g(x_n,y_n))\]
for all $x,y \in \{0,1\}^{\ell \cdot n}$. Here, the vectors $x=(x_1, \ldots, x_n)$ and $y = (y_1, \ldots, y_n)$ are viewed as concatenations of $n$ blocks of size $\ell$.
Query-to-communication lifting results typically show that for some gadget $g$, a gap between certain query complexity measures of $f$ implies a gap between the suitable communication complexity measures of the composed function $f \circ g^n$. For the non-deterministic setting, it is shown in~\cite{GoosLMWZ16} that if the gadget $g$ is the inner product function on vectors of length $\ell = \Theta(\log n)$, then a gap between the unambiguous $1$-certificate complexity and the $0$-certificate complexity for $f$ implies a gap between the unambiguous non-deterministic communication complexity and the co-non-deterministic communication complexity for $f \circ g^n$ (see also~\cite[Appendix~A]{Goos15}).
The analysis uses the fact that the inner product function forms a two-source extractor, as shown by Chor and Goldreich~\cite{ChorG88}.

Let $M$ denote the matrix associated with the communication problem of $f \circ g^n$ for the function $f$ constructed in~\cite{BBGJK21} and the inner product function $g$.
The lifting result of~\cite{GoosLMWZ16} implies that $M$ attains a near optimal separation between $\Rbin(M)$ and $\Rbool(\overline{M})$.
However, it can be seen that the matrix $M$ is not regular at all. For example, the row and the column of $M$ that correspond to the all-zero vector consist of only ones or only zeros, depending on the value of $f$ on the all-zero vector.

We turn to describe how we obtain regular matrices $M$ with a similar gap between $\Rbin(M)$ and $\Rbool(\overline{M})$.
We first observe that to construct a regular matrix $M$, it suffices to replace the inner product function in the above construction by a different gadget function $g$. Specifically, it turns out that if $g$ is unbiased in a strong sense, namely, it is unbiased even while fixing one of its two inputs, then the matrix $M$ associated with $f \circ g^n$ is regular for any function $f$ (see Section~\ref{sec:unbiased}).
Hence, to obtain the desired separation on regular matrices, we provide an extension of the query-to-communication lifting theorem of~\cite{GoosLMWZ16} which allows the gadget function $g$ to be not only the inner product function but any low-discrepancy function. We note that such an extension was speculated already in~\cite[Remark~1]{GoosLMWZ16} and was actually established for the deterministic and probabilistic settings in a recent work of Chattopadhyay, Filmus, Koroth, Meir, and Pitassi~\cite{ChattopadhyayFK21}. Building on the approach of~\cite{GoosLMWZ16} and on tools supplied in~\cite{ChattopadhyayFK21}, we prove that such an extension holds for the non-deterministic setting as well (for a precise statement, see Theorem~\ref{thm:lift}).
We proceed by showing that a slight variant $g$ of the inner product function, studied in~\cite{BoudaPP12} and in~\cite{KothariMR17}, is unbiased in the required sense and has low discrepancy.
Then, to prove Theorem~\ref{thm:main}, we apply our generalized query-to-communication lifting theorem to the family of functions $f$ provided in~\cite{BBGJK21} with this gadget $g$.

Let us mention that our generalized lifting theorem is not essential for the proof of Theorem~\ref{thm:main}. It turns out that the matrix $M$ obtained using the aforementioned gadget function $g$ has a sub-matrix that corresponds to a composition with the standard inner product function, hence the lower bound on $\Rbool(\overline{M})$ can also be derived from the lifting result of~\cite{GoosLMWZ16}. Yet, the generality of our lifting theorem, proved in Appendix~\ref{appendix:lifting}, can be used to obtain a separation between $\Rbin(M)$ and $\Rbool(\overline{M})$ using various other gadget functions, and we believe that it might find additional applications.

We finally use the regular matrices given in Theorem~\ref{thm:main} to provide regular counterexamples for the Alon-Saks-Seymour conjecture and to prove Theorem~\ref{thm:ASS-regular}.
It is shown in~\cite{BousquetLT14} that a matrix $M$ with $\Rbin(M)$ much smaller than $\Rbool(\overline{M})$ can be transformed into a graph $G$ with $\bp(G)$ much smaller than $\chi(G)$. This transformation, however, does not preserve the regularity.
In fact, a natural attempt to produce a regular graph $G$ from a regular matrix $M$ using the approach of~\cite{BousquetLT14} results in a graph that is not even simple (because it has loops).
Moreover, certain steps of the argument of~\cite{BousquetLT14} identify subgraphs of this graph $G$ with a biclique partition number much smaller than the chromatic number, but those subgraphs are not necessarily regular even if $G$ is.
We overcome these difficulties by combining the approach of~\cite{BousquetLT14} with a couple of additional ideas, and show that any square regular matrix $M$ with a large gap between $\Rbin(M)$ and $\Rbool(\overline{M})$ can be transformed into a simple regular graph $G$ with a similar gap between $\bp(G)$ and $\chi(G)$ (see Theorem~\ref{thm:M->G}).

\subsection{Outline}
The rest of the paper is organized as follows.
In Section~\ref{sec:preliminaries}, we collect several definitions and results needed throughout the paper.
In Section~\ref{sec:lifting}, we present our generalized query-to-communication lifting theorem in non-deterministic communication complexity.
Its proof is given in Appendix~\ref{appendix:lifting}.
In Section~\ref{sec:matrices}, we present and analyze a certain gadget function, and combine it with the lifting theorem to prove Theorem~\ref{thm:main}.
Finally, in Section~\ref{sec:graphs}, we obtain regular graphs that form counterexamples to the Alon-Saks-Seymour conjecture and confirm Theorem~\ref{thm:ASS-regular}.

\section{Preliminaries}\label{sec:preliminaries}

\subsection{Non-deterministic Communication Complexity}

Let $\Lambda$ be a finite set, and let $F: \Lambda \times \Lambda \rightarrow \{0,1\}$ be a function.
In the communication problem associated with $F$, one player holds an input $x \in \Lambda$ and another player holds an input $y \in \Lambda$, and their goal is to decide whether $F(x,y)=1$ by a communication protocol that minimizes the worst-case number of communicated bits.
The $0,1$ matrix $M$ associated with the function $F$ is the matrix whose rows and columns are indexed by $\Lambda$, defined by $M_{x,y}=F(x,y)$ for all $x,y \in\Lambda$.
Consider the following three non-deterministic communication complexity measures of a function $F$.
\begin{itemize}
  \item The {\em non-deterministic communication complexity} of $F$, denoted by $\mathsf{NP^{cc}}(F)$, is the smallest possible number of communicated bits in a non-deterministic communication protocol for $F$, that is, a protocol satisfying that $F(x,y)=1$ if and only if there exists an accepting computation on $(x,y)$.
It holds that $\mathsf{NP^{cc}} (F) = \lceil \log_2 \Rbool(M) \rceil$.
  \item The {\em co-non-deterministic communication complexity} of $F$, denoted by $\mathsf{coNP^{cc}}(F)$, is the non-deterministic communication complexity of the negation $\neg F$ of $F$, defined by $(\neg F)(x,y) = 1-F(x,y)$ for all $x,y \in\Lambda$. It thus holds that $\mathsf{coNP^{cc}} (F) = \lceil \log_2 \Rbool(\overline{M}) \rceil$.
  \item A non-deterministic protocol is called {\em unambiguous} if it satisfies that each input has at most one accepting computation. The smallest possible number of communicated bits in such a protocol for $F$ is referred to as the {\em unambiguous non-deterministic communication complexity} of $F$ and is denoted by $\mathsf{UP^{cc}} (F)$.
It holds that $\mathsf{UP^{cc}} (F) = \lceil \log_2 \Rbin(M) \rceil$.
\end{itemize}

\subsection{Composed Functions}\label{sec:compose}

For integers $n$ and $\ell$, let $f : \{0,1\}^n \rightarrow \{0,1\}$ and $g : \{0,1\}^\ell \times \{0,1\}^\ell \rightarrow \{0,1\}$ be two functions.
The function $g^n : \{0,1\}^{\ell \cdot n} \times \{0,1\}^{\ell \cdot n} \rightarrow \{0,1\}^n$ is defined by
\[ g^n (x,y) = (g(x_1,y_1), g(x_2,y_2), \ldots, g(x_n,y_n))\]
for all $x,y \in \{0,1\}^{\ell \cdot n}$, where the vectors $x=(x_1, \ldots, x_n)$ and $y = (y_1, \ldots, y_n)$ are viewed as concatenations of $n$ blocks of size $\ell$.
The {\em composed function} $f \circ g^n : \{0,1\}^{\ell \cdot n} \times \{0,1\}^{\ell \cdot n} \rightarrow \{0,1\}$ is defined by
\[ (f \circ g^n) (x,y) = f( g^n (x,y) ).\]
For a set $I \subseteq [n]$ and a vector $x \in \{0,1\}^{\ell \cdot n}$, we let $x_I \in \{0,1\}^{\ell \cdot |I|}$ denote the projection of $x$ to the blocks whose indices are in $I$. Note that when $I=\{i\}$ for some $i \in [n]$, we have $x_i = x_I$.
For vectors $x,y \in \{0,1\}^{\ell \cdot n}$, we let $g^{I}(x_I,y_I)$ denote the projection of $g^n(x,y)$ to the indices of $I$.

\subsection{Certificate Complexity}\label{sec:query_def}
An $n$-variate {\em $k$-DNF} formula $\varphi$ is a Boolean formula on $n$ variables that can be written as a disjunction $\varphi = c_1 \vee \cdots \vee c_m$, where every $c_i$ is a conjunction of at most $k$ literals.
The formula $\varphi$ is said to be {\em unambiguous} if for every input $x \in \{0,1\}^n$ there is at most one $i \in [m]$ that satisfies $c_i(x)=1$.
For a Boolean function $f: \{0,1\}^n \rightarrow \{0,1\}$, consider the following query complexity measures.
\begin{itemize}
  \item The {\em $1$-certificate complexity} of $f$, denoted by $\mathsf{C}_1(f)$, is the smallest integer $k$ for which $f$ can be written as a $k$-DNF formula.
  \item The {\em $0$-certificate complexity} of $f$, denoted by $\mathsf{C}_0(f)$, is $\mathsf{C}_1(\neg f)$, where $\neg f$ is the negation of $f$. Equivalently, $\mathsf{C}_0(f)$ is the smallest integer $k$ for which $f$ can be written as a $k$-CNF formula.
  \item The {\em unambiguous $1$-certificate complexity} of $f$, denoted by $\mathsf{UC}_1(f)$, is the smallest integer $k$ for which $f$ can be written as an unambiguous $k$-DNF formula.
\end{itemize}

We need the following result that was proved in~\cite{BBGJK21}.

\begin{theorem}[\cite{BBGJK21}]\label{thm:certificate}
For infinitely many integers $r$, there exists a Boolean function $f: \{0,1\}^n \rightarrow \{0,1\}$ satisfying $\mathsf{UC}_1(f) = r$ and $\mathsf{C}_0(f) \geq \widetilde{\Omega}(r^2)$ where $r = n^{\Omega(1)}$.
\end{theorem}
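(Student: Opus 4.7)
The plan is to build $f$ from a pointer-style combinatorial gadget on $n = \Theta(r^2)$ variables. I would view the input as an $r \times r$ array of constant-size cells together with a block of $O(r)$ pointer variables that are intended to single out one row index and one column index. Set $f(x)=1$ iff the pointer block has exactly one row index $i^*$ and one column index $j^*$ active, and some local consistency condition holds on the cell $(i^*, j^*)$ (and possibly along the row or column it selects). The target is that each $1$-input has a unique short local witness, while any narrow CNF must inspect an area of size $\widetilde{\Omega}(r^2)$ to rule out all possible witness placements.

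For the upper bound $\mathsf{UC}_1(f) \leq r$, the strategy is to write an unambiguous DNF containing one term $T_{i,j}$ for every cell $(i,j) \in [r] \times [r]$. The term $T_{i,j}$ fixes the row pointer to $i$, the column pointer to $j$, and the constant-size content of cell $(i,j)$ to its ``good'' value. Each term has width $O(r)$ once the pointer blocks are encoded in unary, and the terms are pairwise exclusive because any two disagree on at least one pointer bit. Tuning the gadget, for example by using a small constant-width consistency predicate per cell, lets me keep the width at exactly $r$ while preserving unambiguity.

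The main obstacle is the lower bound $\mathsf{C}_0(f) \geq \widetilde{\Omega}(r^2)$. Because there are $\Theta(r^2)$ disjoint ``threats'' of being flipped into a $1$-input, one per cell, a natural goal is to argue that any $0$-certificate must collectively address essentially all of them. Concretely, I would fix a distinguished $0$-input $z$ that is locally compatible with every planting of a $1$-witness, and argue by contradiction: if $f$ were a $k$-CNF for some $k$ small relative to $r^2$, then some clause $C$ with $C(z)=0$ would have width at most $k$, but I would show that one can modify $z$ on a carefully chosen patch of $O(r)$ coordinates to obtain a $1$-input on which $C$ remains satisfied, contradicting that $f$ should equal $1$ there. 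The quantitative heart is to show that each width-$k$ clause can ``kill'' at most $\widetilde{O}(k)$ of the $r^2$ planted witness positions, which combined with a counting argument forces $k \geq \widetilde{\Omega}(r^2)$.

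To make this kill-count bound stick, I would embed the witness positions in a pseudorandom or expander-like pattern across the cells, or alternatively set up the construction as a cheat-sheet amplification of a simpler base function having only a polynomial $\mathsf{UC}_1$ versus $\mathsf{C}_0$ gap. The cheat-sheet layer would force any narrow $0$-certificate to localize to a small region of the input, at which point a random-restriction argument reduces the lower bound on the composed function to the (easier) lower bound on the base function. Balancing the parameters of the base function, the cheat sheet size, and the alphabet of each cell is where the polylogarithmic slack absorbed into $\widetilde{\Omega}$ enters, and I expect this parameter balancing, together with proving the kill-count bound unconditionally, to be the technically most demanding step.
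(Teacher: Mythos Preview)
The paper does not prove this theorem at all: it is quoted verbatim from~\cite{BBGJK21} and used as a black box (see the sentence ``We need the following result that was proved in~\cite{BBGJK21}'' immediately preceding the statement). So there is no proof in this paper to compare your proposal against.

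As for the proposal itself, what you describe is closer in spirit to the earlier constructions of G{\"o}{\"o}s and of Ben-David--Hatami--Tal (pointer-style gadgets plus cheat-sheet amplification), which only achieved $\mathsf{C}_0(f) \geq r^{1+\Omega(1)}$-type separations, not the near-quadratic one. The specific argument you sketch has a real gap: in your basic $r\times r$ pointer construction, a $0$-certificate does \emph{not} need to touch $\widetilde{\Omega}(r^2)$ cells. If the pointer block is invalid (say two row indices active), a width-$O(r)$ certificate on the pointer bits alone already certifies $f=0$; and if the pointer block is valid, a width-$O(r)$ certificate reading the pointers plus the single indicated cell suffices. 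So the ``each width-$k$ clause kills at most $\widetilde{O}(k)$ of the $r^2$ witness positions'' claim fails outright for this gadget, and neither embedding the witnesses in an expander pattern nor a single cheat-sheet layer repairs this. The actual~\cite{BBGJK21} construction is substantially different: it goes through a recursive composition tailored to a delicate \emph{unambiguous} certificate structure and a lower-bound argument specific to that structure, rather than a direct kill-count on a flat array. Your outline does not contain the main idea needed to reach $\widetilde{\Omega}(r^2)$.
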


\subsection{Discrepancy}\label{sec:disc}

\begin{definition}[Discrepancy with respect to the uniform distribution]
Let $\Lambda$ be a finite set, and let $g: \Lambda \times \Lambda \rightarrow \{0,1\}$ be a function. Let $X,Y$ be independent random variables that are uniformly distributed over $\Lambda$. The {\em discrepancy} of $g$ (with respect to the uniform distribution) on a combinatorial rectangle $R \subseteq \Lambda \times \Lambda$ is denoted by $\disc_R(g)$ and is defined by
\[ {\disc}_R(g) = \Big | \Prob{}{g(X,Y)=0 \mbox{~and~} (X,Y) \in R} - \Prob{}{g(X,Y)=1 \mbox{~and~} (X,Y) \in R} \Big |.\]
The {\em discrepancy} of $g$, denoted by $\disc (g)$, is defined as the maximum of $\disc_R(g)$ over all combinatorial rectangles $R \subseteq \Lambda \times \Lambda$.
\end{definition}

\section{Lifting from Certificate to Communication Complexity}\label{sec:lifting}

In this section, we present our extension of the query-to-communication lifting theorem in non-deterministic communication complexity to general low-discrepancy functions.
We start with a simple upper bound on the unambiguous non-deterministic communication complexity of a composed function.

\begin{lemma}\label{lemma:lift}
For all functions $f : \{0,1\}^n \rightarrow \{0,1\}$ and $g: \{0,1\}^\ell \times \{0,1\}^\ell \rightarrow \{0,1\}$, it holds that
\[\mathsf{UP^{cc}} ( f \circ g^n) \leq O \big (\mathsf{UC}_1(f) \cdot \max(\log_2 n, \ell) \big ).\]
\end{lemma}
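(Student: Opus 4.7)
The plan is to exhibit an explicit unambiguous non-deterministic protocol for $f \circ g^n$ built directly from an unambiguous DNF representation of $f$. Set $k = \mathsf{UC}_1(f)$, and fix an unambiguous $k$-DNF formula $\varphi = c_1 \vee \cdots \vee c_m$ computing $f$. Each conjunction $c_i$ is determined by a subset $I_i \subseteq [n]$ with $|I_i| \leq k$ and a desired assignment $v^{(i)} \in \{0,1\}^{|I_i|}$ such that $c_i(z) = 1$ iff $z_j = v^{(i)}_j$ for all $j \in I_i$.

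The protocol on input $(x,y) \in \{0,1\}^{\ell n} \times \{0,1\}^{\ell n}$ proceeds as follows. Both players non-deterministically guess a conjunction, encoded as a pair $(I, v)$ with $|I| \leq k$; this can be transmitted in $O(k \log n) + k = O(k \log n)$ bits, where specifying $I$ uses $O(k \log_2 n)$ bits and $v$ uses at most $k$ bits. Alice then sends $x_I$, which costs $\ell |I| \leq \ell k$ bits. Finally, Bob computes $g(x_j, y_j)$ for every $j \in I$ using Alice's message and his own input, and accepts iff $g(x_j, y_j) = v_j$ for all $j \in I$. The total communication is $O(k \log n + k \ell) = O(k \cdot \max(\log_2 n, \ell))$, which matches the bound claimed in the lemma.

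It remains to verify the three required properties. For completeness: if $(f \circ g^n)(x,y) = 1$, then $f(g^n(x,y)) = 1$, so there exists an $i$ with $c_i(g^n(x,y)) = 1$, meaning $g(x_j, y_j) = v^{(i)}_j$ for all $j \in I_i$; guessing this $c_i$ yields an accepting transcript. For soundness: an accepting transcript with guess $(I_i, v^{(i)})$ and Alice's message $x_{I_i}$ witnesses that $g(x_j, y_j) = v^{(i)}_j$ for all $j \in I_i$, hence $c_i(g^n(x,y)) = 1$ and thus $f(g^n(x,y)) = 1$. For unambiguity: the accepting transcripts on input $(x,y)$ are in bijection with the conjunctions $c_i$ satisfied by $g^n(x,y)$ (since once $c_i$ is fixed, Alice's message $x_{I_i}$ is determined by her input); by the unambiguity of $\varphi$ there is at most one such $c_i$, so there is at most one accepting transcript.

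I do not anticipate a substantive obstacle here, since the statement is really just packaging the unambiguous DNF into a communication protocol. The only point requiring a bit of care is confirming that unambiguity is preserved: a priori one might worry that different transcripts could accept the same input, but because Alice's contribution after the non-deterministic guess is a deterministic function of $(x, c_i)$, the count of accepting transcripts equals the count of conjunctions of $\varphi$ satisfied by $g^n(x,y)$, which is at most one. The rest is bookkeeping of the encoding lengths.
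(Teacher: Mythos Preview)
Your proposal is correct and follows essentially the same approach as the paper: both take an unambiguous $k$-DNF for $f$, have the players non-deterministically guess a conjunction (costing $O(k\log n)$ bits, since there are at most $(2n)^k$ conjunctions), then exchange the relevant $O(k\ell)$ bits of input to verify it. The only cosmetic differences are that the paper sends an index $i\in[m]$ rather than encoding $(I,v)$ directly, and has both players send their blocks rather than just Alice; your one-sided version and explicit unambiguity check are fine and do not change the argument.
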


\begin{proof}
Put $k = \mathsf{UC}_1(f)$. Then, the function $f$ can be written as an unambiguous $n$-variate $k$-DNF formula $\varphi = c_1 \vee \cdots \vee c_m$ where $m \leq (2n)^k$.
Consider the following non-deterministic protocol for the communication problem associated with the function $f \circ g^n$.
Let $x,y \in \{0,1\}^{\ell \cdot n}$ be the inputs of the players. The first player selects non-deterministically an index $i \in [m]$ and sends it to the other player.
Let $I \subseteq [n]$ denote the set of indices of the variables that appear in the clause $c_i$, and note that $|I| \leq k$.
Then, the first player sends the projection $x_I$ of $x$ to the blocks of $I$, and similarly, the second player sends the projection $y_I$ of $y$ to the blocks of $I$.
The players accept if and only if $c_i (g^I(x_I,y_I)) = 1$.

Observe that $(f \circ g^n) (x,y) = 1$ if and only if the protocol has an accepting computation on the inputs $x,y$.
Observe further that the fact that $\varphi$ is unambiguous implies that the protocol is unambiguous as well.
Finally, the number of bits communicated by the protocol is
\[O(\log_2 m + k \cdot \ell) \leq O \big ( k \cdot \max (\log_2 n , \ell) \big ),\]
completing the proof.
\end{proof}

We turn to state a lower bound on the co-non-deterministic communication complexity of composed functions $f \circ g^n$ for low-discrepancy functions $g$.
Its proof is given in Appendix~\ref{appendix:lifting}.

\begin{theorem}\label{thm:lift}
For every $\eta >0$ there exists $c>0$ for which the following holds.
Let $\ell$ and $n$ be integers such that $\ell \geq c \cdot \log_2 n$, and let $g: \{0,1\}^\ell \times \{0,1\}^\ell \rightarrow \{0,1\}$ be a function satisfying $\disc(g) \leq 2^{-\eta \cdot \ell}$.
Then, for every function $f : \{0,1\}^n \rightarrow \{0,1\}$, it holds that
\[\mathsf{coNP^{cc}} ( f \circ g^n) \geq \Omega \big ( \eta \cdot \mathsf{C}_0(f) \cdot \ell \big ).\]
\end{theorem}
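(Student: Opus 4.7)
The plan is to adapt the non-deterministic lifting approach of~\cite{GoosLMWZ16} to arbitrary low-discrepancy gadgets, using the deterministic-lifting framework of~\cite{ChattopadhyayFK21} to argue that the discrepancy hypothesis is the correct abstraction of the inner-product extractor step used there. Suppose $\mathsf{coNP^{cc}}(f \circ g^n) = t$; then the $0$-inputs of $f \circ g^n$ are covered by at most $2^t$ combinatorial rectangles, each contained in $(f \circ g^n)^{-1}(0)$. I aim to show that each such rectangle $R = X \times Y$ induces a $0$-certificate of $f$ of size $O(t/(\eta \ell))$. Since every $0$-input of $f$ lifts to a $0$-input of $f \circ g^n$ (using that $g$ is non-constant, which follows from $\disc(g) < 1/2$) and is therefore covered by one of the rectangles, the collection of extracted certificates yields a $k$-CNF representation of $f$ with $k = O(t/(\eta \ell))$, giving $\mathsf{C}_0(f) \leq O(t/(\eta \ell))$, equivalent to the desired bound.

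First I would set up the structure-lemma framework. Define the \emph{deficiency} of a set $X \subseteq \{0,1\}^{\ell n}$ on a block set $I \subseteq [n]$ by $\Delta_I(X) = \ell \cdot |I| - \log_2 |X_I|$, where $X_I$ is the projection of $X$ onto the blocks in $I$, and similarly $\Delta_I(Y)$. Starting from a rectangle $R$ in the cover, I would apply a density-restoration procedure in the style of~\cite{GoosP018,ChattopadhyayFK21}: iteratively identify a block on which either $X$ or $Y$ has per-block deficiency exceeding a threshold of $(\eta/2) \cdot \ell$, fix it to its most popular value on that side (paying a controlled min-entropy loss), and move it to a ``structured'' set $J \subseteq [n]$. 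Accounting for the initial deficiency of $R$ (roughly $t$ bits after a standard preprocessing that discards negligibly small rectangles) and the amortized gain per restoration step, the procedure terminates with $|J| \leq O(t/(\eta \ell))$ and a sub-rectangle $X' \times Y' \subseteq R$ such that each block in $J$ is fixed to a single string on both sides, while the projections $X'_I, Y'_I$ on $I = [n] \setminus J$ have per-block min-entropy at least $(1 - \eta/2)\ell$.

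The second ingredient, which is where the discrepancy hypothesis enters and which I expect to be the main obstacle, is an \emph{extractor step} showing that for a sufficiently structured sub-rectangle $X' \times Y'$ the composed gadget $g^I : X'_I \times Y'_I \to \{0,1\}^I$ is surjective. I would prove surjectivity by a Fourier expansion: for a target $v \in \{0,1\}^I$, write the number of preimages of $v$ as an average over characters $\chi_S$ of $\{0,1\}^I$ indexed by $S \subseteq I$. The trivial character $S = \varnothing$ contributes the uniform mass $|X'_I| \cdot |Y'_I| / 2^{|I|}$, while each non-trivial character factorizes block-wise into a product over $i \in S$ of single-block discrepancy tests of $g$ on the rectangle $(X'_i, Y'_i)$. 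Each such factor is bounded in absolute value by $\disc(g) \leq 2^{-\eta \ell}$, and the per-block min-entropy guarantees from the structure lemma, together with the assumption $\ell \geq c \log_2 n$ for sufficiently large $c = c(\eta)$, ensure that the total mass of the $2^{|I|} - 1$ non-trivial terms is dominated by the leading uniform contribution. This is the non-deterministic analogue of the XOR-lemma-type calculation used in~\cite{ChattopadhyayFK21}, and carrying it out in the form needed here (pointwise surjectivity per target $v$, rather than a weaker statistical-distance bound) is the principal technical hurdle.

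Given surjectivity, the $0$-monochromaticity of $R$, and hence of $X' \times Y'$, forces $f(v_I, v_J) = 0$ for \emph{every} $v_I \in \{0,1\}^I$, where $v_J \in \{0,1\}^J$ is the value of $g^J$ on the fixed blocks of the sub-rectangle. Hence $v_J$ is a $0$-certificate of $f$ of size $|J| \leq O(t/(\eta \ell))$, and the outline at the start of this proposal concludes the proof.
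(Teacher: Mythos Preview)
Your proposal has the right high-level architecture but contains a genuine gap in the per-rectangle framing.

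You aim to extract \emph{one} $0$-certificate from each rectangle $R$ in the cover, namely the partial assignment $v_J = g^J(x_J,y_J)$ obtained after your density-restoration procedure fixes the blocks in $J$ to their most popular values. You then assert that these certificates cover all $0$-inputs of $f$, because every such $z$ has a lifted pair $(x,y)$ that lands in some $R$. But the lifted pair you used to witness that $z$ hits $R$ need not lie in the sub-rectangle $X' \times Y' \subseteq R$ you end up with after restriction; in particular there is no reason for $z_J = g^J(x_J,y_J)$ to equal the specific $v_J$ you froze. So the certificate you extract from $R$ may simply not be a certificate for $z$, and the collection of at most $2^t$ certificates need not form a DNF for $\neg f$. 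Relatedly, your claim that the ``initial deficiency of $R$ is roughly $t$'' is unfounded: individual rectangles in a $0$-cover can be arbitrarily small, and there is no ``standard preprocessing that discards negligibly small rectangles'' in the cover setting that preserves exact coverage of all $0$-inputs.

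The paper (following~\cite{GoosLMWZ16}) resolves both issues simultaneously by running the argument \emph{per $z$} rather than per rectangle: fix $z \in f^{-1}(0)$, pick the rectangle $R$ that covers at least a $2^{-t}$ fraction of the fiber $(g^n)^{-1}(z)$, and carry out the density argument on the random variable $(X',Y')$ uniform on $(g^n)^{-1}(z) \cap R$ --- which is \emph{not} a rectangle. This is what makes the deficiency bound of $t$ legitimate and guarantees that the extracted certificate is a certificate for the specific $z$ you started from. A secondary technical point: your Fourier step claims that the non-trivial characters ``factorize block-wise into a product over $i \in S$ of single-block discrepancy tests on $(X'_i,Y'_i)$''. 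This is false unless $X'_I$ and $Y'_I$ are product sets across blocks, which your procedure does not produce. The paper instead invokes the density/discrepancy extractor of~\cite{ChattopadhyayFK21} (their Proposition~3.10), which requires joint $\delta$-density of $(X_S,Y_S)$ over all $S$ with $X,Y$ independent; to meet the independence hypothesis the paper passes from the correlated $(X'',Y'')$ to independent copies $(X''',Y''')$ and verifies that density degrades only from $\delta$ to $2\delta-1$.
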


\section{The Binary and Boolean Rank of Regular Matrices}\label{sec:matrices}

In what follows we consider the notion of strongly unbiased functions and show that compositions with such functions are associated with regular matrices.
We then present a strongly unbiased function and analyze its discrepancy.
Equipped with this function, we apply the lifting theorem from the previous section to prove Theorem~\ref{thm:main}.

\subsection{Strongly Unbiased Functions}\label{sec:unbiased}

Consider the following definition.

\begin{definition}
Let $\ell$ be an integer.
We call a function $g : \{0,1\}^\ell \times \{0,1\}^\ell \rightarrow \{0,1\}$ {\em strongly unbiased } if for every vector $x \in \{0,1\}^\ell$, the number of vectors $y \in \{0,1\}^\ell$ satisfying $g(x,y)=1$ is $2^{\ell-1}$, and for every vector $y \in \{0,1\}^\ell$, the number of vectors $x \in \{0,1\}^\ell$ satisfying $g(x,y)=1$ is $2^{\ell-1}$.
Equivalently, $g$ is strongly unbiased if the matrix associated with $g$ is $2^{\ell-1}$-regular.
\end{definition}

The following lemma shows that compositions with strongly unbiased functions are associated with regular matrices.

\begin{lemma}\label{lemma:regular}
For all functions $g : \{0,1\}^\ell \times \{0,1\}^\ell \rightarrow \{0,1\}$ and $f: \{0,1\}^n \rightarrow \{0,1\}$, if $g$ is strongly unbiased then the matrix associated with the composed function $f \circ g^n$ is regular.
\end{lemma}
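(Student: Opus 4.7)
The plan is to show directly that every row-sum of $M$ and every column-sum of $M$ equals the same explicit quantity, namely $|f^{-1}(1)| \cdot 2^{n(\ell-1)}$. The key input is the observation that the strong unbiasedness of $g$ means that for any \emph{fixed} $x_i \in \{0,1\}^\ell$, exactly $2^{\ell-1}$ of the vectors $y_i \in \{0,1\}^\ell$ satisfy $g(x_i,y_i)=1$, and symmetrically for fixed $y_i$. This lets us decouple the computation block-by-block.

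First I would fix an arbitrary row index $x = (x_1, \ldots, x_n) \in \{0,1\}^{\ell \cdot n}$ and count the number of $y \in \{0,1\}^{\ell \cdot n}$ with $(f \circ g^n)(x,y)=1$. Grouping $y$'s according to the value $z = g^n(x,y) \in \{0,1\}^n$ they induce, this row-sum equals
\[
\sum_{z \in \{0,1\}^n} f(z) \cdot \Big|\sett{y \in \{0,1\}^{\ell \cdot n} : g^n(x,y) = z}\Big|.
\]
Since the $n$ blocks of $y$ act independently, the cardinality inside the sum factors as $\prod_{i=1}^n |\sett{y_i \in \{0,1\}^\ell : g(x_i,y_i)=z_i}|$. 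By the definition of strongly unbiased, each factor is exactly $2^{\ell-1}$, irrespective of $x_i$ and $z_i$. Hence the cardinality is $2^{n(\ell-1)}$ for every $z$, and the row-sum collapses to $|f^{-1}(1)| \cdot 2^{n(\ell-1)}$, a quantity that does not depend on the chosen row $x$.

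Next I would repeat the same argument with the roles of $x$ and $y$ swapped: for any fixed column index $y$, the second half of the strong unbiasedness condition guarantees that exactly $2^{\ell-1}$ values of $x_i$ satisfy $g(x_i,y_i)=z_i$ for each $z_i \in \{0,1\}$, so the same count gives $|f^{-1}(1)| \cdot 2^{n(\ell-1)}$ for every column. Thus $M$ is $d$-regular with $d = |f^{-1}(1)| \cdot 2^{n(\ell-1)}$, which completes the proof.

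There is no substantial obstacle here; the argument is a direct counting calculation, and the only thing to be careful about is the factorization step where block-wise independence is used, but this is immediate because $g^n$ is defined block-wise.
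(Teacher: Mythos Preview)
Your proof is correct and follows essentially the same approach as the paper: both arguments use strong unbiasedness to show that for any fixed $x$ and any $z \in \{0,1\}^n$ there are exactly $2^{n(\ell-1)}$ vectors $y$ with $g^n(x,y)=z$, so each row (and by symmetry each column) of $M$ has exactly $|f^{-1}(1)| \cdot 2^{n(\ell-1)}$ ones.
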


\begin{proof}
Let $g : \{0,1\}^\ell \times \{0,1\}^\ell \rightarrow \{0,1\}$ be a strongly unbiased function, let $f: \{0,1\}^n \rightarrow \{0,1\}$ be a function, and let $M$ be the matrix of dimensions $2^{\ell \cdot n} \times 2^{\ell \cdot n}$ associated with the composed function $f \circ g^n$.
Since $g$ is strongly unbiased, it follows that for every vector $x \in \{0,1\}^{\ell \cdot n}$ and for every vector $a \in \{0,1\}^n$, precisely $2^{-n}$ fraction of the vectors $y \in \{0,1\}^{\ell \cdot n}$ satisfy $g^n(x,y)=a$. This implies that the row of the matrix $M$ that corresponds to a vector $x \in \{0,1\}^{\ell \cdot n}$ consists of the evaluations of the function $f$ on all vectors $a \in \{0,1\}^n$, where each such evaluation appears exactly $2^{-n} \cdot 2^{\ell \cdot n} = 2^{(\ell-1)n}$ times. In particular, the number of ones in this row is $2^{(\ell-1)n} \cdot |f^{-1}(1)|$. Since this number is independent of $x$, it follows that this is the number of ones in each row of the matrix $M$. By symmetry, this is also the number of ones in each column of $M$, implying that the matrix $M$ is regular.
\end{proof}

\subsection{The Gadget Function}\label{sec:gadget}

For an integer $\ell \geq 1$, define the function $g_{\ell}: \{0,1\}^\ell \times \{0,1\}^\ell \rightarrow \{0,1\}$ by
\[ g_{\ell}(x,y) = x_1+y_1+ \sum_{i=2}^{\ell}{x_i \cdot y_i}~~(\mod~2)\]
for all $x,y \in \{0,1\}^\ell$. We first observe that $g_\ell$ is strongly unbiased.

\begin{lemma}\label{lemma:g_l_strongly}
For every integer $\ell \geq 1$, the function $g_{\ell}$ is strongly unbiased.
\end{lemma}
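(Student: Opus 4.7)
The plan is to observe that for any fixed $x \in \{0,1\}^\ell$, the mapping $y \mapsto g_\ell(x,y)$ is an affine function of $y$ over $\mathbb{F}_2$, and in fact a \emph{non-constant} one, since the coefficient of $y_1$ in the expression $x_1 + y_1 + \sum_{i=2}^\ell x_i y_i$ is identically $1$, independent of $x$. Any non-constant $\mathbb{F}_2$-affine function from $\{0,1\}^\ell$ to $\{0,1\}$ takes each of its two values on exactly $2^{\ell-1}$ inputs; hence the number of $y$ with $g_\ell(x,y)=1$ is exactly $2^{\ell-1}$.

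For the symmetric claim, I would note that $g_\ell$ is symmetric in its two arguments: $g_\ell(x,y) = g_\ell(y,x)$, because $x_1+y_1$ and $\sum_{i=2}^\ell x_i y_i$ are both symmetric. Therefore, for any fixed $y$, the mapping $x \mapsto g_\ell(x,y) = g_\ell(y,x)$ is also a non-constant affine function of $x$ (the coefficient of $x_1$ is $1$), and the same counting argument gives $2^{\ell-1}$ preimages of $1$. This verifies both conditions of the definition of strongly unbiased.

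I do not foresee any real obstacle; the only point that requires a line of care is the standard fact that a non-constant affine function on $\mathbb{F}_2^\ell$ is balanced, which follows immediately by pairing inputs via the translation $y \mapsto y + e_1$ (where $e_1$ is the first standard basis vector) — this involution flips the value of $g_\ell(x,\cdot)$ and hence partitions $\{0,1\}^\ell$ into $2^{\ell-1}$ pairs, one element of each pair landing on $0$ and the other on $1$. The symmetric argument uses the involution $x \mapsto x + e_1$.
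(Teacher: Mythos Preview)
Your proposal is correct and essentially identical to the paper's proof: the paper fixes $x$, pairs each $y$ with the vector obtained by flipping $y_1$ (phrased as ``for every suffix $y' \in \{0,1\}^{\ell-1}$ exactly one of the two vectors $y$ with that suffix satisfies $g(x,y)=1$''), and then invokes symmetry for the other direction. Your affine-function framing and the involution $y \mapsto y + e_1$ are exactly this argument in slightly different language.
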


\begin{proof}
Consider the function $g_\ell$ for an integer $\ell \geq 1$.
By definition, for every $x \in \{0,1\}^\ell$, it holds that for every $y' \in \{0,1\}^{\ell-1}$ exactly one of the two vectors $y \in \{0,1\}^\ell$ with suffix $y'$ satisfies $g(x,y)=1$. This implies that for every $x \in \{0,1\}^\ell$ precisely $2^{\ell-1}$ of the vectors $y \in \{0,1\}^\ell$ satisfy $g (x,y)=1$. By symmetry, we also have that for every $y \in \{0,1\}^\ell$ precisely $2^{\ell-1}$ of the vectors $x \in \{0,1\}^\ell$ satisfy $g (x,y)=1$, so we are done.
\end{proof}

We turn to show that the functions $g_\ell$ have low discrepancy.
We note that this can be directly derived from a bound on the discrepancy of the inner product function.
Yet, we present below a bound with a somewhat better multiplicative constant, borrowing an argument of Bouda, Pivoluska, and Plesch~\cite{BoudaPP12}.

We start with some definitions.
A {\em Hadamard matrix} is a $\pm1$ matrix in which every two distinct rows and every two distinct columns are orthogonal over the reals.
A standard example for a Hadamard matrix is the $2^\ell \times 2^\ell$ matrix $H_\ell$, with rows and columns indexed by the vectors of $\{0,1\}^\ell$, defined by $(H_\ell)_{x,y} = (-1)^{\sum_{i=1}^{\ell}{x_i \cdot y_i}}$ for all $x,y \in \{0,1\}^\ell$.
A lemma of Lindsey asserts that every submatrix of a Hadamard matrix is quite balanced (for a proof, see, e.g.,~\cite[Lemma~8]{ChorG88}).

\begin{lemma}[Lindsey's Lemma]\label{lemma:lindsey}
Let $H$ be an $n \times n$ Hadamard matrix. Then, the sum of elements in every $r \times s$ submatrix of $H$ is at most $\sqrt{r \cdot s \cdot n}$.
\end{lemma}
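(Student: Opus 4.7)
The plan is to reduce the submatrix-sum bound to a single Cauchy-Schwarz estimate, using the defining orthogonality property of Hadamard matrices. Let $R \subseteq [n]$ and $S \subseteq [n]$ be the row and column index sets of the prescribed $r \times s$ submatrix, and let $u \in \{0,1\}^n$ and $v \in \{0,1\}^n$ be their indicator vectors, so that $\|u\|_2^2 = r$ and $\|v\|_2^2 = s$. Denote the rows of $H$ by $H_1, \ldots, H_n$. The sum of entries in the submatrix is
\[
\sum_{i \in R, j \in S} H_{i,j} \;=\; \sum_{i \in R} \langle H_i, v \rangle \;=\; \langle u, Hv \rangle,
\]
where $Hv \in \R^n$ is the vector whose $i$-th coordinate equals $\langle H_i, v \rangle$.

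Next I would apply the Cauchy-Schwarz inequality to obtain
\[
|\langle u, Hv \rangle| \;\leq\; \|u\|_2 \cdot \|Hv\|_2 \;=\; \sqrt{r} \cdot \|Hv\|_2.
\]
To evaluate $\|Hv\|_2^2 = v^{\top} H^{\top} H v$, I would use the Hadamard property: by definition every two distinct columns of $H$ are orthogonal over $\R$, and every column has squared norm $n$, so $H^{\top} H = n \cdot I$. Consequently $\|Hv\|_2^2 = n \cdot \|v\|_2^2 = n \cdot s$, which combined with the previous display yields the desired inequality
\[
\Bigl|\sum_{i \in R, j \in S} H_{i,j}\Bigr| \;\leq\; \sqrt{r \cdot s \cdot n}.
\]

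There is no real obstacle here; the whole argument is two lines once one sees that the submatrix sum is a bilinear form $\langle u, Hv \rangle$ in the indicator vectors of the chosen rows and columns. The only subtlety worth flagging in the write-up is that the stated bound concerns the signed sum $\sum H_{i,j}$ (entries are $\pm 1$), so one should either state the conclusion with an absolute value or observe that the same bound holds with a sign flip, which is also a bilinear form of the same type.
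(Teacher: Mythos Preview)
Your proof is correct and is the standard argument. The paper does not actually supply its own proof of this lemma; it merely cites~\cite[Lemma~8]{ChorG88}, whose proof is essentially the same Cauchy--Schwarz computation you give (write the submatrix sum as $u^{\top} H v$ and use $H^{\top} H = nI$).
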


\begin{lemma}\label{lemma:disc}
For every integer $\ell \geq 1$, the discrepancy of the function $g_{\ell}$ satisfies $\disc(g_\ell) \leq 2^{-(\ell+3)/2}$.
\end{lemma}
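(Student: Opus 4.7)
The plan is to reduce the discrepancy on a rectangle to a bilinear form over the $2^{\ell-1}\times 2^{\ell-1}$ Hadamard matrix $H_{\ell-1}$, and then bound it using Cauchy--Schwarz together with the orthogonality of $H_{\ell-1}$. First, fix a combinatorial rectangle $R = A \times B \subseteq \{0,1\}^\ell \times \{0,1\}^\ell$, so that
\[ {\disc}_R(g_\ell) = \frac{1}{2^{2\ell}}\left| \sum_{x \in A,\, y \in B}(-1)^{g_\ell(x,y)}\right|. \]
Write $x = (x_1, x')$ and $y = (y_1, y')$ with $x', y' \in \{0,1\}^{\ell-1}$, and note that $(-1)^{g_\ell(x,y)} = (-1)^{x_1+y_1} \cdot (-1)^{\sum_{i=2}^{\ell}x_i y_i}$. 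Set $A_a = \{x' \in \{0,1\}^{\ell-1} : (a,x') \in A\}$ and $B_b = \{y' \in \{0,1\}^{\ell-1} : (b,y') \in B\}$ for $a,b \in \{0,1\}$. Expanding the resulting indicators, a direct computation gives
\[ \sum_{x \in A,\, y \in B}(-1)^{g_\ell(x,y)} = \sum_{x', y' \in \{0,1\}^{\ell-1}} \alpha_{x'}\beta_{y'}(-1)^{\langle x', y'\rangle} = \alpha^{T} H_{\ell-1}\beta, \]
where $\alpha, \beta \in \{-1,0,1\}^{2^{\ell-1}}$ are defined by $\alpha_{x'} = \mathbf{1}_{A_0}(x') - \mathbf{1}_{A_1}(x')$ and $\beta_{y'} = \mathbf{1}_{B_0}(y') - \mathbf{1}_{B_1}(y')$.

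Next, by Cauchy--Schwarz together with the identity $H_{\ell-1}^{T}H_{\ell-1} = 2^{\ell-1} \cdot I$, one has
\[ |\alpha^{T}H_{\ell-1}\beta| \le \|\alpha\| \cdot \|H_{\ell-1}\beta\| = 2^{(\ell-1)/2} \cdot \|\alpha\| \cdot \|\beta\|. \]
Since $\alpha$ and $\beta$ each have $2^{\ell-1}$ entries of absolute value at most $1$, it follows that $\|\alpha\|^2, \|\beta\|^2 \le 2^{\ell-1}$. Substituting gives $|\alpha^{T}H_{\ell-1}\beta| \le 2^{(\ell-1)/2}\cdot 2^{\ell-1} = 2^{(3\ell-3)/2}$, and dividing by $2^{2\ell}$ yields ${\disc}_R(g_\ell) \le 2^{-(\ell+3)/2}$. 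Taking the maximum over all rectangles $R$ then gives the desired bound on $\disc(g_\ell)$.

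The main subtle point I anticipate is the $\ell^2$-bound $\|\alpha\|^2 \le 2^{\ell-1}$: the weaker estimate $\|\alpha\|^2 \le |A| \le 2^\ell$ would cost a factor of $\sqrt{2}$ and yield only $2^{-(\ell+1)/2}$. The sharper bound captures the implicit cancellation from the $x_1 + y_1$ part of $g_\ell$, since $\alpha_{x'} = 0$ whenever both $(0, x')$ and $(1, x')$ lie in $A$. This bilinear-form viewpoint, following Bouda, Pivoluska, and Plesch, is essentially a generalization of Lindsey's Lemma to signed indicator vectors, and no further obstacle is expected in filling in the details.
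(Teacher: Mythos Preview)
Your proof is correct and reaches the same bound $2^{3(\ell-1)/2}$ on the signed sum before the final normalization. The paper's argument and yours are built on the same two ideas---the cancellation coming from the $x_1+y_1$ term and the orthogonality of $H_{\ell-1}$---but the packaging differs. The paper works combinatorially with the block form $\left(\begin{smallmatrix} H & -H\\ -H & H\end{smallmatrix}\right)$: it first deletes paired rows $i,\,i+2^{\ell-1}$ of $A$ (and likewise for $B$) whose contributions cancel, then applies Lindsey's Lemma as a black box to the surviving $2^{\ell-1}\times 2^{\ell-1}$ Hadamard submatrix. You instead encode the same cancellation algebraically via the signed indicators $\alpha,\beta\in\{-1,0,1\}^{2^{\ell-1}}$ and apply Cauchy--Schwarz with $H_{\ell-1}^{T}H_{\ell-1}=2^{\ell-1}I$ in a single step; this is exactly the proof of Lindsey's Lemma unrolled for signed vectors, so you bypass the separate lemma. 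Your observation that $\|\alpha\|^2\le 2^{\ell-1}$ (rather than $\le |A|$) is precisely the paper's row-cancellation step in disguise: $\alpha_{x'}=0$ whenever both $(0,x')$ and $(1,x')$ lie in $A$. The upshot is a slightly shorter, self-contained argument that avoids invoking Lindsey's Lemma explicitly, at the cost of hiding the geometric block structure that the paper makes visible.
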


\begin{proof}
Let $M$ denote the $2^\ell \times 2^\ell$ matrix associated with the function $g_\ell$, and let $N$ be the $2^\ell \times 2^\ell$ matrix defined by $N_{i,j} = (-1)^{M_{i,j}}$ for all $i,j \in [2^\ell]$. Observe that the matrix $N$ is equal, up to a permutation of the rows and columns, to the matrix
\[\left(
    \begin{array}{cc}
      H & -H \\
      -H & H \\
    \end{array}
  \right),
\]
where $H = H_{\ell-1}$ is the $2^{\ell-1} \times 2^{\ell-1}$ matrix associated with the inner product function on pairs of vectors of length $\ell-1$.
Let $A,B \subseteq [2^\ell]$ be sets of rows and columns in $N$, and consider the combinatorial rectangle $R = A \times B$. We turn to show that the sum of elements of $N$ in the entries of $R$ does not exceed $2^{3(\ell-1)/2}$.

Observe first that if the set $A$ includes both $i$ and $i+2^{\ell-1}$ for some $i \in [2^{\ell-1}]$, then the sum of the elements of $N$ in the rows of $A \times B$ that correspond to these indices is zero. Letting $A' \subseteq A$ be the set of rows obtained from $A$ by removing those pairs, it suffices to bound the sum of elements of $N$ in the entries of $A' \times B$.
Consider the $2^{\ell-1} \times 2^{\ell}$ submatrix $N'$ of $N$ defined as follows. For every $i \in [2^{\ell-1}]$, if $i \in A'$ then the $i$th row of $N'$ is the $i$th row of $N$, and otherwise it is the $i$th row of $N$ multiplied by $-1$ (i.e., the row of $N$ indexed by $i+2^{\ell-1}$).
Observe that the rectangle $A' \times B$ in $N$ lies in the submatrix $N'$ which can be written as $N' = (H',-H')$, where the $i$th row of $H'$ is either the $i$th row of $H$ or the $i$th row of $H$ multiplied by $-1$. Notice that $H'$ is a Hadamard matrix, and let $A'' \times B$ denote the rectangle in $N'$ that corresponds to the rectangle $A' \times B$ in $N$.

Next, observe that if the set $B$ includes both $i$ and $i+2^{\ell-1}$ for some $i \in [2^{\ell-1}]$, then the sum of the elements of $N'$ in the columns of $A'' \times B$ that correspond to these indices is zero. As before, letting $B' \subseteq B$ be the set of columns obtained from $B$ by removing those pairs, it suffices to bound the sum of elements of $N'$ in the entries of $A'' \times B'$. It now follows that this rectangle lies in a $2^{\ell-1} \times 2^{\ell-1}$ submatrix $H''$ of $N'$, where the $i$th column of $H''$ is either the $i$th column of $H'$ or the $i$th column of $H'$ multiplied by $-1$. Notice that the matrix $H''$ is a Hadamard matrix as well. By Lemma~\ref{lemma:lindsey}, the sum of elements of $H''$ in the entries of $A'' \times B'$ does not exceed $\sqrt{|A''| \cdot |B'| \cdot 2^{\ell-1}}$. By $|A''|,|B'| \leq 2^{\ell-1}$, it follows that the latter is at most $2^{3(\ell-1)/2}$. As explained above, this is also an upper bound on the sum of elements of $N$ in the entries of the rectangle $R$.

Finally, let $m_0$ and $m_1$ denote, respectively, the numbers of zeros and ones of $M$ in the entries of the rectangle $R$. It holds that $|m_0 - m_1| \leq 2^{3(\ell-1)/2}$, and this implies that
\[ {\disc}_R(g_\ell) = \Big |\frac{m_0-m_1}{2^{2\ell}} \Big | \leq \frac{2^{3(\ell-1)/2}}{2^{2\ell}} = 2^{-(\ell+3)/2},\]
completing the proof.
\end{proof}

\subsection{Proof of Theorem~\ref{thm:main}}

We are ready to put everything together and to complete the proof of Theorem~\ref{thm:main}.

\begin{proof}[ of Theorem~\ref{thm:main}]
By Theorem~\ref{thm:certificate}, for infinitely many integers $r$, there exists a Boolean function $f: \{0,1\}^n \rightarrow \{0,1\}$ satisfying $\mathsf{UC}_1(f) = r$ and $\mathsf{C}_0(f) \geq \widetilde{\Omega}(r^2)$ where $r = n^{\Omega(1)}$.
For an integer $\ell$, consider the function $g_\ell : \{0,1\}^\ell \times \{0,1\}^\ell \rightarrow \{0,1\}$ defined in Section~\ref{sec:gadget}.
By Lemma~\ref{lemma:disc}, it holds that $\disc(g_\ell) \leq 2^{-\eta \cdot \ell}$ for $\eta = 1/2$. Theorem~\ref{thm:lift} yields that there exists a constant $c$, such that for $\ell = \lceil c \cdot \log_2 n \rceil$, the composed function $f \circ g_\ell^n$ satisfies
\begin{eqnarray}\label{eq:coNP}
\mathsf{coNP^{cc}} ( f \circ g_\ell^n) \geq \Omega(\mathsf{C}_0(f) \cdot \ell) \geq \widetilde{\Omega}(r^2).
\end{eqnarray}
By Lemma~\ref{lemma:lift}, it further holds that
\begin{equation}\label{eq:UP}
\mathsf{UP^{cc}} ( f \circ g_\ell^n) \leq O (\mathsf{UC}_1(f) \cdot \ell) \leq \widetilde{O}(r),
\end{equation}
where for the second inequality we have used our choice of $\ell$ and the fact that $r = n^{\Omega(1)}$.

To complete the proof, let $M$ be the square $2^{\ell \cdot n} \times 2^{\ell \cdot n}$ matrix associated with the composed function $f \circ g_\ell^n$.
By Lemma~\ref{lemma:g_l_strongly}, the function $g_\ell$ is strongly unbiased, hence by Lemma~\ref{lemma:regular}, the matrix $M$ is regular.
Recalling that $\mathsf{UP^{cc}} ( f \circ g_\ell^n) = \lceil \log_2 \Rbin(M) \rceil$, it follows from~\eqref{eq:UP} that
\begin{eqnarray}\label{eq:RbinM-r}
\Rbin(M) \leq 2^{\widetilde{O}(r)}.
\end{eqnarray}
Put $k = \Rbin(M)$, and combine~\eqref{eq:coNP} and~\eqref{eq:RbinM-r} with the fact that $\mathsf{coNP^{cc}} ( f \circ g_\ell^n) = \lceil \log_2 \Rbool(\overline{M}) \rceil$ to obtain that
\[\Rbool(\overline{M}) \geq 2^{\widetilde{\Omega}(r^2)} \geq k^{\widetilde{\Omega}(\log k)},\]
and we are done.
\end{proof}

\section{The Alon-Saks-Seymour Conjecture and Regular Graphs}\label{sec:graphs}

In this section, we prove the following theorem.

\begin{theorem}\label{thm:M->G}
For every square regular $0,1$ matrix $M$, there exists a simple regular graph $G$ satisfying
\[{\bp}(G) \leq 33 \cdot \Rbin(M)^2 ~~~\mbox{and}~~~ \chi(G) \geq \Rbool(\overline{M})^{1/3}.\]
\end{theorem}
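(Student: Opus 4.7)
The plan is to adapt the matrix-to-graph reduction of Bousquet, Lagoutte, and Thomass\'e to the regular setting. Given the square regular $0,1$ matrix $M$ of size $n \times n$, I will first construct an intermediate graph $G'$ whose ranks are directly tied to those of $M$, and then modify $G'$ into a simple regular graph $G$ satisfying the theorem.

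The intermediate graph $G'$ has vertex set $V(G') = \{(i,j) \in [n]^2 : M_{i,j}=0\}$, and two distinct vertices $(i,j)$ and $(i',j')$ are declared adjacent iff $M_{i,j'}=1$ or $M_{i',j}=1$. A direct verification shows that the independent sets of $G'$ are exactly the sets of zero entries of $M$ contained in a common all-zero combinatorial rectangle, which yields $\chi(G') = \Rbool(\overline{M})$. For the biclique partition number, fix an optimal rectangle partition $\{P_\alpha \times Q_\alpha\}_{\alpha \in [k]}$ of the ones of $M$, with $k = \Rbin(M)$. For each ordered pair $(\alpha,\beta) \in [k]^2$, define a biclique in $G'$ with sides $\{(i,j) \in V(G') : i \in P_\alpha,\, j \in Q_\beta\}$ and $\{(i',j') \in V(G') : i' \in P_\beta,\, j' \in Q_\alpha\}$. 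These $k^2$ bicliques cover the edges of $G'$ whose two ``witnesses'' (the entries $(i,j')$ and $(i',j)$) are both ones of $M$; a further collection of $O(k)$ bicliques handles edges with only one witness. Altogether $\bp(G') \leq O(\Rbin(M)^2)$.

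To produce a simple regular graph $G$ from $G'$, which is not regular in general (since the degree of $(i,j)$ in $G'$ depends on the codegrees of row $i$ and column $j$ in $M$), I plan to use a triple-cover construction: take $V(G) = V(G') \times \{1,2,3\}$ with edges chosen so that the $d$-regularity of $M$ gives each vertex a uniform cross-layer contribution to its degree, and intra-layer ``padding bicliques'' built from the rectangles $P_\alpha \times Q_\alpha$ absorb the residual degree variation without introducing many new bicliques. Since all additional bicliques come from the existing rectangle partition of $M$, the total biclique partition number grows by at most a constant factor, giving $\bp(G) \leq 33 \cdot \Rbin(M)^2$ after careful accounting. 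For the chromatic number lower bound, a proper $c$-coloring of $G$ induces a map $V(G') \to [c]^3$ sending each $v \in V(G')$ to its triple of colors across the three copies, and the resulting at-most-$c^3$ classes yield a rectangular cover of the zeros of $M$, giving $\chi(G)^3 \geq \Rbool(\overline{M})$.

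The main obstacle is the regularization step, where we must simultaneously achieve uniform degree, avoid loops, and control $\bp(G)$ by $O(\Rbin(M)^2)$. A naive padding with isolated edges would inflate $\bp(G)$ to $\Omega(n)$, which is unacceptable when $\Rbin(M) \ll n$. The resolution is to exploit the $d$-regularity of $M$: it forces the degree variation in $G'$ to factor through the codegrees of $M$, which can in turn be balanced by bicliques drawn from the optimal rectangle partition of $M$. The factor-$3$ blowup introduces enough combinatorial slack to make this balancing work, at the cost of the cube-root exponent in the chromatic number bound and the constant $33$ in front of $\Rbin(M)^2$.
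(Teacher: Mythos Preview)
Your high-level plan---start from the Bousquet--Lagoutte--Thomass\'e graph and regularize via a blow-up---is the same as the paper's, but two of your steps contain genuine gaps.

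\textbf{The chromatic bound.} Your argument that $\chi(G)^3 \geq \Rbool(\overline{M})$ via color-triples does not work as stated. Suppose $v=(i,j)$ and $w=(i',j')$ are adjacent in $G'$ and receive the same triple $(c_1,c_2,c_3)$. To derive a contradiction you would need some edge of $G$ between two vertices carrying the \emph{same} coordinate of the triple, i.e.\ an intra-layer edge $(v,a)\sim(w,a)$. But if each layer already contains a copy of $G'$, you get $\chi(G)\geq\chi(G')$ directly and the cube root is unnecessary; and if the layers do not contain copies of $G'$ (because of your padding), the color-triple map gives no information about adjacency in $G'$. In the paper the cube root arises for a completely different reason: one splits the edges of the BLT graph into those covered once and those covered twice by a size-$k$ $2$-biclique covering, uses $\chi(H_0)\leq \chi(H_0^{(1)})\cdot\chi(H_0^{(2)})$, and then does a case analysis on whether $\chi(H_0^{(2)})\geq m^{1/3}$.

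\textbf{The regularization.} Working only on the zero entries $V_0$ loses the clean degree count that $d$-regularity provides. The paper instead builds the auxiliary graph $H$ on the \emph{full} vertex set $[n]\times[n]$ (allowing loops at the $V_1$ vertices). Then, for instance, the number of $(i_2,j_2)\in[n]^2$ with $M_{i_1,j_2}=M_{i_2,j_1}=1$ is exactly $d^2$, and the number with $M_{i_1,j_2}=1$ or $M_{i_2,j_1}=1$ counted with orientation is exactly $2nd$---both independent of $(i_1,j_1)$. If you restrict to $V_0$, the corresponding count acquires a correction term (the number of ones in a certain $d\times d$ submatrix of $M$) that genuinely varies with $(i_1,j_1)$, and there is no reason this variation can be absorbed by $O(k)$ bicliques coming from the rectangle partition. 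The paper handles loops and regularity simultaneously by (in Case~1) separating the bicliques of the covering into those with equal parts and those with disjoint parts, moving the equal-part bicliques to cross-edges between two copies, and (in Case~2) using a cyclic $3$-layer construction on all of $[n]^2$ together with a carefully chosen independent set $S\subseteq V_0$; in both cases Claim~5.2 (which you do not invoke) converts the $2$-biclique covering of size $k$ into a biclique partition of size $\leq(4k)^2$ of the twice-covered edges. Your sketch does not supply a substitute for any of these mechanisms.
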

\noindent
As mentioned earlier, it was conjectured by Alon, Saks, and Seymour that every graph $G$ satisfies $\bp(G) \geq \chi(G)-1$, and the conjecture was disproved in a strong sense in a series of works.
Applying Theorem~\ref{thm:M->G} to the matrices given by Theorem~\ref{thm:main} yields {\em regular} graphs that form counterexamples $G$ to the conjecture with a near optimal gap between $\bp(G)$ and $\chi(G)$. This confirms Theorem~\ref{thm:ASS-regular}.

\subsection{Biclique Covering}

We start with some definitions that will be used throughout the proof of Theorem~\ref{thm:M->G}.
All graphs considered here are undirected. They do not contain parallel edges but they may have loops. As usual, a graph is said to be {\em simple} if it contains no loops and no parallel edges.
For a graph $G=(V,E)$, a {\em biclique of $G$} is a complete bipartite subgraph of $G$, that is, a pair $(A,B)$ of sets $A,B \subseteq V$ where every vertex of $A$ is adjacent in $G$ to every vertex of $B$. For adjacent vertices $x,y$ of $G$ such that $x \in A$ and $y \in B$, we say that the biclique $(A,B)$ covers the oriented edge $(x,y)$.
Note that although the edges of $G$ are undirected, a biclique of $G$ covers edges of $G$ with some orientation.
For a set $S \subseteq V$, we let $G[S]$ denote the subgraph of $G$ induced by $S$.

For an integer $t$, a {\em $t$-biclique covering} of $G$ is a collection of bicliques of $G$ that cover every edge of $G$ at least once and at most $t$ times.
The minimum size of such a covering is called the {\em $t$-biclique covering number of $G$} and is denoted by ${\bp}_t(G)$.
For $t=1$, a $1$-biclique covering is also called a {\em biclique partition}, and we write $\bp(G) = \bp_1(G)$.

We need the following result of Bousquet, Lagoutte, and Thomass{\'{e}}~\cite{BousquetLT14}.
For the sake of completeness, we include its short proof in Appendix~\ref{appendix:claim}.
\begin{claim}[{\cite[Claim~28]{BousquetLT14}}]\label{claim:bp_1}
Let $H=(V,E)$ be a simple graph, and let $\calC$ be a $t$-biclique covering of size $k$ of $H$.
Let $E' \subseteq E$ be the set of edges of $H$ that are covered by $\calC$ exactly $t$ times.
Then, the graph $H' = (V,E')$ satisfies $\bp(H') \leq (2k)^t$.
\end{claim}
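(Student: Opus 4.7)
The plan is to give a direct combinatorial proof by assigning each edge of $E'$ a signature that records which bicliques of $\calC$ cover it, and how the two endpoints are split across the sides of those bicliques.

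Since $H$ is simple, every biclique $B_i = (A_i, C_i) \in \calC$ must satisfy $A_i \cap C_i = \emptyset$ (a vertex in the intersection would require a loop), so each edge $e$ covered by $B_i$ has exactly one endpoint in $A_i$ and the other in $C_i$. For $e = \{x, y\} \in E'$, set $S(e) = \{i \in [k] : B_i \text{ covers } e\}$; by the definition of $E'$ we have $|S(e)| = t$, and we write $S(e) = \{i_1 < \cdots < i_t\}$. After picking an orientation of $e$ (i.e., designating one endpoint as $x$), define $\tau(e) \in \{L, R\}^t$ by $\tau(e)_j = L$ iff $x \in A_{i_j}$. Flipping the orientation flips every entry of $\tau(e)$, so imposing the convention $\tau(e)_1 = L$ yields a well-defined signature $\sigma(e) = (S(e), \tau(e))$ on the unoriented edge.

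Next, for each admissible pair $(S, \tau)$ with $S = \{i_1 < \cdots < i_t\} \subseteq [k]$, $\tau \in \{L, R\}^t$, and $\tau_1 = L$, I will consider the candidate biclique $(P_{S, \tau}, Q_{S, \tau})$, where
\[ P_{S, \tau} = \{v \in V : v \in A_{i_j} \text{ if } \tau_j = L, \text{ and } v \in C_{i_j} \text{ if } \tau_j = R, \text{ for every } j \in [t] \} , \]
and $Q_{S, \tau}$ is defined analogously with the roles of $A$ and $C$ swapped. Any pair $(x, y) \in P_{S, \tau} \times Q_{S, \tau}$ lies on opposite sides of each $B_{i_j}$, so $\{x, y\}$ is an edge of $H$ covered by all $t$ bicliques $B_{i_1}, \ldots, B_{i_t}$; the $t$-biclique covering hypothesis then forces it to be covered exactly $t$ times, placing it in $E'$. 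Hence $(P_{S, \tau}, Q_{S, \tau})$ is a biclique of $H'$. To show that these bicliques partition $E'$, observe that each $e \in E'$ is in the one indexed by $\sigma(e)$, and conversely, if $\{x, y\} \in P_{S, \tau} \times Q_{S, \tau}$, then $S \subseteq S(\{x,y\})$ combined with $|S| = t = |S(\{x,y\})|$ forces $S = S(\{x,y\})$, while the convention $\tau_1 = L$ pins down the orientation and forces $\tau = \tau(\{x, y\})$.

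Finally, the number of such signatures is $\binom{k}{t} \cdot 2^{t-1} \leq (2k)^t$, giving $\bp(H') \leq (2k)^t$. The main delicacy is the partition step, where one has to simultaneously exploit the equality $|S(e)| = t$ (so that no smaller $S$ can also legitimately index the edge) and the upper cap of the $t$-biclique covering (so that a candidate $\{x, y\} \in P_{S,\tau} \times Q_{S, \tau}$ is not overcovered and thereby pushed into $E \setminus E'$). Once both guardrails are in place, the bound reduces to a clean count of signature strings.
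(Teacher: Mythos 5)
Your proof is correct and follows essentially the same route as the paper: you label each edge of $E'$ by its $t$ covering bicliques together with a side pattern, use the at-most-$t$ cap to force every cross pair back into $E'$, and count labels by $\binom{k}{t}\cdot 2^{t-1} \leq (2k)^t$. The only cosmetic difference is that you define each biclique top-down as an intersection of sides $(P_{S,\tau},Q_{S,\tau})$, whereas the paper shows bottom-up that the edges sharing a label are cross-closed and hence form a biclique; the content is the same.
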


\subsection{From Regular Matrices to Regular Graphs}

We are ready to prove Theorem~\ref{thm:M->G}.
Before its formal proof, let us briefly describe the proof strategy.
Given a regular $0,1$ matrix $M$, our goal is to construct a simple regular graph $G$ such that its biclique partition number is not much larger than the binary rank of $M$, and its chromatic number is not much smaller than the Boolean rank of $M$. In the first phase of the proof, we use $M$ to construct an intermediate graph $H$. While this graph is not simple, it admits a $2$-biclique covering whose size equals the binary rank of $M$, and it contains a simple subgraph whose chromatic number is at least the Boolean rank of $M$.
In the second phase of the proof, we use $H$ to construct a simple regular graph $G$ with a relatively small $1$-biclique covering and yet a large chromatic number. The regularity of the produced graph $G$ crucially relies on the regularity of the matrix $M$.

\begin{proof}[ of Theorem~\ref{thm:M->G}]
Let $M$ be an $n \times n$ regular $0,1$ matrix, and let $d$ denote the number of ones in each row and each column of $M$.
Put $k = \Rbin(M)$ and $m = \Rbool(\overline{M})$.

We first define a graph $H=(V,E)$ on the vertex set $V = [n] \times [n]$ in which every two (not necessarily distinct) vertices  $(i_1,j_1), (i_2,j_2) \in V$ are adjacent if
\[M_{i_1,j_2}=1~~~\mbox{ or }~~~M_{i_2,j_1}=1.\]
Define $V_0 = \{ (i,j) \in V~|~M_{i,j}=0\}$ and $V_1 = \{ (i,j) \in V~|~M_{i,j}=1\}$.
Note that $V = V_0 \cup V_1$, and notice that the vertices of $H$ that have loops are precisely the vertices of $V_1$.

Let $H_0 = H[V_0]$ denote the subgraph of $H$ induced on the vertices of $V_0$.
Clearly, $H_0$ is a simple graph.
The following lemma relates its chromatic number to the Boolean rank of $\overline{M}$.

\begin{lemma}\label{lemma:chrom_H0}
The graph $H_0$ satisfies $\chi(H_0) \geq m$.
\end{lemma}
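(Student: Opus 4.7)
The plan is to show that any proper coloring of $H_0$ yields a cover of the ones of $\overline{M}$ by combinatorial rectangles using the same number of parts. Since $V_0$ is exactly the set of positions where $\overline{M}$ has a one, this will give $\Rbool(\overline{M}) \leq \chi(H_0)$, which is what we want.

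First I would rewrite the adjacency relation of $H_0$ in terms of $M$. By definition, two vertices $(i_1,j_1), (i_2,j_2) \in V_0$ are \emph{non-}adjacent in $H_0$ if and only if $M_{i_1,j_2}=0$ and $M_{i_2,j_1}=0$. Consequently, a set $S \subseteq V_0$ is independent in $H_0$ if and only if for every pair (not necessarily distinct) $(i_1,j_1), (i_2,j_2) \in S$ we have $M_{i_1,j_2} = M_{i_2,j_1} = 0$ (the ``not necessarily distinct'' point matters precisely because $V_0$ is loop-free, so $M_{i,j}=0$ already for any single $(i,j) \in S$).

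Next I would extract from each independent set a zero-rectangle of $M$ that contains it. Given an independent set $S \subseteq V_0$, define
\[ I = \{ i : \exists j \text{ with } (i,j) \in S\}, \qquad J = \{ j : \exists i \text{ with } (i,j) \in S\}. \]
For any $i \in I$ and $j \in J$, choose witnesses $(i,j_1), (i_2,j) \in S$; the non-adjacency condition applied to this pair gives $M_{i,j}=0$. Hence $I \times J$ is a combinatorial rectangle of zeros in $M$, i.e.\ a rectangle of ones in $\overline{M}$, and of course $S \subseteq I \times J \subseteq V_0$.

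Finally, I would take a proper coloring of $H_0$ with $\chi(H_0)$ colors, apply the above construction to each color class, and observe that the resulting $\chi(H_0)$ rectangles of ones in $\overline{M}$ together cover all of $V_0$, which is exactly the support of $\overline{M}$. Therefore $\Rbool(\overline{M}) \leq \chi(H_0)$, yielding $\chi(H_0) \geq m$. I do not anticipate any real obstacle: the argument is a direct translation between proper colorings of $H_0$ and rectangle covers of the ones of $\overline{M}$, and all of the work is in correctly identifying that the adjacency rule defining $H$ is precisely what is needed so that independent sets project to zero-rectangles of $M$.
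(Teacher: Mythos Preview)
Your proposal is correct and follows essentially the same approach as the paper: both take an optimal proper coloring of $H_0$, project each color class onto its first and second coordinates to obtain a combinatorial rectangle of zeros in $M$, and observe that these rectangles cover all of $V_0$, yielding $\Rbool(\overline{M}) \leq \chi(H_0)$. Your witness argument for why $I\times J$ consists of zeros is spelled out a bit more explicitly than in the paper, but the idea is identical.
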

\begin{proof}
Put $r = \chi(H_0)$. Then, there exists a partition of $V_0$ into $r$ independent sets $I_1, \ldots, I_r$ of $H_0$.
For each $t \in [r]$, let $A_t$ be the set of elements $i \in [n]$ for which there exists some $j \in [n]$ such that $(i,j) \in I_t$, and let $B_t$ be the set of elements $j \in [n]$ for which there exists some $i \in [n]$ such that $(i,j) \in I_t$. Since $I_t$ is an independent set in $H_0$, it follows that every pair $(i,j) \in A_t \times B_t$ satisfies $M_{i,j}=0$. This implies that $A_t \times B_t$ is a combinatorial rectangle of zeros in the matrix $M$. Since the $r$ given independent sets cover the entire set $V_0$, it follows that for every pair $(i,j) \in V_0$ there exists some $t \in [r]$ such that $(i,j) \in I_t$, and this $t$ satisfies $(i,j) \in A_t \times B_t$.
This shows that the rectangles $A_t \times B_t$ with $t \in [r]$ form a cover of the zeros of $M$, hence $r \geq \Rbool(\overline{M}) = m$, as required.
\end{proof}

The next lemma provides a $2$-biclique covering of $H$ whose size equals the binary rank of $M$.
\begin{lemma}\label{lemma:bp2_H0}
There exists a $2$-biclique covering $\calC$ of $H$ such that
\begin{enumerate}
  \item\label{itm:1} $|\calC| = k$,
  \item\label{itm:2} for every adjacent distinct vertices $(i_1,j_1), (i_2,j_2)$ of $H$, if both $M_{i_1,j_2}=1$ and $M_{i_2,j_1}=1$ hold, then the edge that connects them is covered by $\calC$ twice in the two opposite orientations, and if only one of them holds, then it is covered by $\calC$ once, and
  \item\label{itm:3} every loop of $H$ is covered by $\calC$ once.
\end{enumerate}
\end{lemma}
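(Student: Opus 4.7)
The plan is to construct $\calC$ directly from a minimum binary-rank decomposition of $M$. Since $\Rbin(M) = k$, I fix a partition of the ones of $M$ into $k$ monochromatic rectangles $P_1 \times Q_1, \ldots, P_k \times Q_k$ with $P_t, Q_t \subseteq [n]$. For each $t \in [k]$, I would define
\[A_t = \{(i,j) \in V : i \in P_t\}, \qquad B_t = \{(i,j) \in V : j \in Q_t\},\]
and take $\calC = \{(A_t, B_t) : t \in [k]\}$, which immediately gives $|\calC| = k$ and thus condition~\ref{itm:1}. To see that each $(A_t, B_t)$ is a biclique of $H$, note that for every $(i_1, j_1) \in A_t$ and $(i_2, j_2) \in B_t$ we have $i_1 \in P_t$ and $j_2 \in Q_t$, hence $M_{i_1, j_2} = 1$, which by the definition of $H$ means that $(i_1, j_1)$ and $(i_2, j_2)$ are adjacent. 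Any vertex in $A_t \cap B_t$ then lies in $V_1$ and carries a loop, so the pair is a legitimate biclique even when the two sides intersect.

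The heart of the argument is a per-oriented-edge counting argument. For any ordered pair of vertices $((i_1,j_1),(i_2,j_2))$ of $H$, the biclique $(A_t, B_t)$ covers the oriented edge from $(i_1,j_1)$ to $(i_2,j_2)$ iff $i_1 \in P_t$ and $j_2 \in Q_t$, i.e., iff $(i_1, j_2) \in P_t \times Q_t$. Because the rectangles partition the ones of $M$, this happens for exactly one $t$ when $M_{i_1,j_2} = 1$ and for no $t$ when $M_{i_1,j_2} = 0$. Applying this separately to each orientation of an undirected edge between distinct vertices yields condition~\ref{itm:2}: each orientation is covered by exactly one biclique iff the corresponding entry of $M$ equals $1$, so the edge is covered twice in opposite orientations when both entries are $1$, and once otherwise. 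For a loop at $(i,j) \in V_1$, the case $i_1 = i_2 = i$, $j_1 = j_2 = j$ of the same analysis shows that exactly one biclique covers it (the unique $t$ with $(i,j) \in P_t \times Q_t$), giving condition~\ref{itm:3}.

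Combining these per-edge counts, every edge of $H$ is covered at least once (the adjacency forces at least one witnessing $1$-entry of $M$) and at most twice (at most one biclique per orientation), so $\calC$ is a valid $2$-biclique covering of size $k$. I do not foresee a serious obstacle: the argument is essentially a bookkeeping ``lift'' sending each rectangle $P_t \times Q_t$ of $M$ to a biclique of $H$ indexed by pairs whose first coordinate lies in $P_t$ and whose second coordinate lies in $Q_t$, and the three conditions are then read off directly from the partition property of the rectangles. Note that the regularity of $M$ plays no role in this lemma; it will only enter the picture later, when $H$ is turned into the simple regular graph $G$ and Claim~\ref{claim:bp_1} is applied.
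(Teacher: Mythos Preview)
Your proposal is correct and is essentially identical to the paper's proof: the paper also takes a minimum partition of the ones of $M$ into rectangles (there denoted $A_t\times B_t$) and lifts each to the biclique $(A_t\times[n],\,[n]\times B_t)$, then argues per oriented edge exactly as you do. Only the notation differs.
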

\begin{proof}
By $k = \Rbin(M)$, there exists a collection of $k$ combinatorial rectangles $A_t \times B_t$ of ones, $t \in [k]$, that forms a partition of the ones of the matrix $M$.
For each $t \in [k]$, define \[C_t = (A_t \times [n], [n] \times B_t),\]
and note that it follows from the definition of $H$ that $C_t$ is a biclique.
Let $\calC$ be the collection of all the bicliques $C_t$ for $t \in [k]$.

Let $(i_1,j_1), (i_2,j_2)$ be two (not necessarily distinct) vertices of $H$.
If $M_{i_1,j_2} =1$ then there exists a unique $t \in [k]$ such that $(i_1,j_2) \in A_t \times B_t$. This implies that the oriented edge $( (i_1,j_1), (i_2,j_2) )$ is covered by the biclique $C_t$ and is not covered by any other biclique of $\calC$. If, however, it holds that $M_{i_1,j_2} =0$, then no $t \in [k]$ satisfies $(i_1,j_2) \in A_t \times B_t$, hence the oriented edge $( (i_1,j_1), (i_2,j_2) )$ is not covered by any biclique of $\calC$.

We turn to show that $\calC$ is a $2$-biclique covering of $H$ that satisfies the assertion of the lemma.
By definition, we have $|\calC|=k$, as required for Item~\ref{itm:1}.
Let $(i_1,j_1), (i_2,j_2)$ be two distinct vertices of $H$.
If the vertices are adjacent then $M_{i_1,j_2}=1$ or $M_{i_2,j_1}=1$.
The above discussion implies that if both the conditions hold then the edge that connects them is covered twice in the two opposite orientations, whereas if only one of the conditions holds, then the edge is covered once, as required for Item~\ref{itm:2}.
For a vertex $(i,j)$ that has a loop, it holds that $M_{i,j}=1$, hence the oriented edge $((i,j), (i,j))$ is covered once by $\calC$, as required for Item~\ref{itm:3}.
On the other hand, if the vertices $(i_1,j_1), (i_2,j_2)$ are not adjacent then $M_{i_1,j_2}=0$ and $M_{i_2,j_1}=0$, hence no oriented edge between them is covered by $\calC$.
It thus follows that $\calC$ is a $2$-biclique covering of $H$, and we are done.
\end{proof}

Let $\calC$ be the $2$-biclique covering of $H$ given by Lemma~\ref{lemma:bp2_H0}.
Consider the two subgraphs of $H_0$ defined by $H_0^{(1)} = (V_0,E_1)$ and $H_0^{(2)} = (V_0,E_2)$, where $E_t$ is the set of edges of $H_0$ that are covered by $\calC$ exactly $t$ times for $t \in [2]$. Notice that the edge set of $H_0$ is $E_1 \cup E_2$.
By assigning to every vertex of $H_0$ the pair of its colors according to some optimal proper colorings of $H_0^{(1)}$ and $H_0^{(2)}$, it follows that
\begin{eqnarray}\label{eq:union_H0}
\chi(H_0) \leq \chi(H_0^{(1)}) \cdot \chi(H_0^{(2)}).
\end{eqnarray}
To obtain the desired simple regular graph, we proceed by considering the following two cases according to the chromatic number of $H_0^{(2)}$.

\paragraph{Case 1.}
Suppose first that $\chi(H_0^{(2)}) \geq m^{1/3}$.
Let $\calC'$ be the collection of bicliques of $H$ obtained from $\calC$ by replacing every biclique $(A,B) \in \calC$ by the three bicliques
\[(A \cap B, A \cap B),~(A \cap B, B \setminus A),~ \mbox{and}~ (A \setminus B, B),\]
where bicliques with an empty part can be avoided.
Observe that these three bicliques cover precisely the same edges covered by $(A,B)$ with the same multiplicities and orientations, where the first biclique has equal parts and the other two have disjoint parts. Note that $A \cap B$ is not necessarily empty because loops are allowed.
It follows that $\calC'$ is a $2$-biclique covering of $H$ of size $|\calC'|\leq 3k$ which satisfies Items~\ref{itm:2} and~\ref{itm:3} of Lemma~\ref{lemma:bp2_H0}.
Letting $\calC'' \subseteq \calC'$ denote the collection of bicliques of $\calC'$ with equal parts, it follows that $|\calC''| \leq k$ and $|\calC' \setminus \calC''| \leq 2k$.

Every biclique of $\calC''$ has the form $(A,A)$ for some set $A \subseteq V$. For every $x \in A$, it covers a loop of $x$ as an oriented edge $(x,x)$, and for every distinct $x,y \in A$, it covers the edge that connects $x$ and $y$ in the two opposite orientations, namely, as $(x,y)$ and as $(y,x)$.
This implies that all the vertices that appear in the bicliques of $\calC''$ have loops in $H$ and thus belong to $V_1$.
Since the parts of the bicliques of $\calC' \setminus \calC''$ are disjoint, it follows that they do not cover any loops, hence the bicliques of $\calC''$ cover all the loops of $H$.
Since $\calC'$ is a $2$-biclique covering of $H$, it follows that no edge is covered by both $\calC''$ and $\calC' \setminus \calC''$.

Let $F$ be the graph obtained from $H$ by removing the edges of the bicliques of $\calC''$.
Since the bicliques of $\calC''$ cover all the loops of $H$, it follows that the graph $F$ is simple.
The collection $\calC' \setminus \calC''$ forms a $2$-biclique covering of $F$, hence ${\bp}_2(F) \leq 2k$.
Let $F^{(2)}$ denote the subgraph of $F$ on $V$ that includes all the edges that are covered by $\calC' \setminus \calC''$ twice.
Since the bicliques of $\calC''$ involve only vertices of $V_1$, it follows that $F^{(2)}$ has an induced subgraph isomorphic to $H_0^{(2)}$, implying that
\begin{eqnarray}\label{eq:ch(F2)}
\chi(F^{(2)}) \geq \chi(H_0^{(2)}) \geq m^{1/3}.
\end{eqnarray}

Now, let $G$ be the graph that contains two disjoint copies of $F^{(2)}$, with additional edges between the two copies according to the bicliques of $\calC''$.
More precisely, $G$ is the graph on the vertex set $V \times [2]$ in which two vertices $(x,b)$ and $(y,b)$ for $b \in [2]$ are adjacent if $x$ and $y$ are adjacent in $F^{(2)}$, and two vertices $(x,1)$ and $(y,2)$ are adjacent if $(x,y)$ is an oriented edge covered by the bicliques of $\calC''$.
The graph $G$ is simple, because $F^{(2)}$ is simple and because no oriented edge is covered twice by $\calC''$.
We claim that $G$ satisfies the assertion of the theorem.

Firstly, $G$ has an induced subgraph isomorphic to $F^{(2)}$, hence it follows from~\eqref{eq:ch(F2)} that
\[\chi(G) \geq \chi(F^{(2)}) \geq m^{1/3}.\]

Secondly, we claim that $\bp(G) \leq 33 \cdot k^2$.
To see this, use Claim~\ref{claim:bp_1} and ${\bp}_2(F) \leq 2k$ to obtain that $\bp(F^{(2)}) \leq (4k)^2$, that is, at most $(4k)^2$ bicliques are needed for a partition of the edges of each copy of $F^{(2)}$ in $G$.
Consider further the bicliques $(A \times \{1\},A \times \{2\})$ for $(A,A) \in \calC''$, which form a partition with size at most $k$ of the edges of $G$ between the vertices of $V \times \{1\}$ and those of $V \times \{2\}$. It follows that
\[\bp(G) \leq 2 \cdot (4k)^2 + k \leq 33 \cdot k^2.\]

Finally, we claim that $G$ is regular with degree $d^2$.
To see this, consider an arbitrary vertex $(i_1,j_1,b) \in V \times [2]$ in $G$.
This vertex is adjacent to the vertices $(i_2,j_2,b)$ for which the pairs $(i_1,j_1)$ and $(i_2,j_2)$ are adjacent in $H$ and the edge that connects them is covered twice by $\calC' \setminus \calC''$.
It is further adjacent to the vertices $(i_2,j_2,b')$ with $b' \neq b$ for which the pairs $(i_1,j_1)$ and $(i_2,j_2)$ are adjacent in $H$ and the edge that connects them is covered by $\calC''$ (twice if they are distinct, and once otherwise).
Since $\calC'$ satisfies Items~\ref{itm:2} and~\ref{itm:3} of Lemma~\ref{lemma:bp2_H0}, it follows that the degree of $(i_1,j_1,b)$ in $G$ is precisely the number of pairs $(i_2,j_2) \in V$ satisfying $M_{i_1,j_2}=1$ and $M_{i_2,j_1}=1$. By the $d$-regularity of $M$, the latter is equal to $d^2$, so we are done.

\paragraph{Case 2.}
Suppose next that $\chi(H_0^{(2)}) < m^{1/3}$.
We start by proving that there exists an independent set $S \subseteq V_0$ in the graph $H_0^{(2)}$ for which
\begin{equation}\label{eq:chiH_0(1)[S]}
\chi(H_0^{(1)}[S]) \geq m^{1/3}.
\end{equation}
Indeed, the assumption implies that there exists a proper coloring of $H_0^{(2)}$ with fewer than $m^{1/3}$ colors. If the induced subgraph of $H_0^{(1)}$ on every color class of this coloring has chromatic number smaller than $m^{1/3}$, then one can obtain a proper coloring of $H_0^{(1)}$ whose number of colors is smaller than $m^{1/3} \cdot m^{1/3} = m^{2/3}$, which implies using~\eqref{eq:union_H0} that $\chi(H_0) < m^{2/3} \cdot m^{1/3} = m$, in contradiction to Lemma~\ref{lemma:chrom_H0}.
This implies that some color class $S \subseteq V_0$ of the coloring of $H_0^{(2)}$ satisfies~\eqref{eq:chiH_0(1)[S]}.

Now, consider the $3$-partite graph $G'$ whose vertex set consists of three copies of $V$ that are connected by three copies of the bicliques of $\calC$ oriented in a cyclic manner. More precisely, the vertex set of $G'$ is $V \times [3]$ and its edges are those of the bicliques
\[ (A \times \{1\}, B \times \{2\}),~~(A \times \{2\}, B \times \{3\}),~~\mbox{and}~~(A \times \{3\}, B \times \{1\}) \]
for all $(A,B) \in \calC$.
By Lemma~\ref{lemma:bp2_H0}, no oriented edge of the bicliques of $\calC$ is covered twice.
It thus follows that $G'$ is a simple graph and that each of its edges is covered by the above bicliques exactly once. By $|\calC| = k$, it follows that $\bp(G') \leq 3k$.
Further, Items~\ref{itm:2} and~\ref{itm:3} of Lemma~\ref{lemma:bp2_H0} imply that the degree of every vertex $(i_1,j_1,b) \in V \times [3]$ of $G'$ is precisely the sum of the number of pairs $(i_2,j_2) \in V$ satisfying $M_{i_1,j_2}=1$ and the number of pairs $(i_2,j_2) \in V$ satisfying $M_{i_2,j_1}=1$. Since the matrix $M$ is $d$-regular, it follows that the graph $G'$ is regular with degree $2nd$.

We next define a graph $G$ as follows.
The graph $G$ is obtained from $G'$ by removing all the edges whose both endpoints are in $S \times [3]$ and by adding the edges of the induced subgraph $H[S]$ of $H$ on $S$ to each of the three copies of $S$ in $G$ (i.e., $S \times \{b\}$ for $b \in [3]$).
Since $G'$ is a simple graph, using the fact that $S$ is a subset of $V_0$ and thus spans no loops, it follows that $G$ is a simple graph as well.
We claim that $G$ satisfies the assertion of the theorem.

Firstly, since $S$ is an independent set in $H_0^{(2)}$, the subgraph of $G$ induced on every copy of $S$ is isomorphic to $H_0^{(1)}[S]$. It thus follows from~\eqref{eq:chiH_0(1)[S]} that
\[\chi(G) \geq \chi(H_0^{(1)}[S]) \geq m^{1/3}.\]

Secondly, we claim that $\bp(G) \leq 9k$.
To see this, recall that $\bp(G') \leq 3k$, and consider some biclique partition with size at most $3k$ of the edges of $G'$.
Replace each biclique $(A \times \{b\}, B \times \{b'\})$ of this partition, where $b \neq b'$, by the two bicliques
\[( (A \setminus S) \times \{b\}, B \times \{b'\})~\mbox{ and }~( (A \cap S) \times \{b\}, (B \setminus S) \times \{b'\}).\]
This gives us a biclique partition with size at most $6k$ of all the edges of $G'$ but those spanned by the vertices of $S \times [3]$.
It remains to cover the edges of the three copies of $H[S]$ in $G$.
Since $S$ is an independent set in $H_0^{(2)}$, each edge of $H[S]$ is covered by $\calC$ exactly once, so by restricting the bicliques of $\calC$ to the vertices of $S$, we get a biclique partition of $H[S]$ with size at most $k$. This gives us a biclique partition with size at most $k$ of the edges of $G[V \times \{b\}]$ for each $b \in [3]$, implying that $\bp(G) \leq 6k+3k = 9k$.

Finally, we claim that $G$ is regular.
To see this, recall that $G'$ is regular and that $G$ is obtained from $G'$ by replacing the edges between the different copies of $S$ by the corresponding edges inside the copies of $S$. Since those edges are covered exactly once by $\calC$, this does not change the degrees of the vertices, yielding that the graph $G$ is regular as well, and we are done.
\end{proof}

\section*{Acknowledgements}
We thank the anonymous reviewers for their helpful and constructive comments.

\bibliographystyle{abbrv}
\bibliography{binary_rank}

\section*{Appendix}
\appendix

\section{Proof of Theorem~\ref{thm:lift}}\label{appendix:lifting}

In this appendix we prove Theorem~\ref{thm:lift}.
We need the following definitions.

\begin{definition}[Min-entropy]
The {\em min-entropy} $\ent(X)$ of a discrete random variable $X$ is defined as
\[\ent (X) = \min_{x \in \supp (X)}{\log_2 \frac{1}{\Prob{}{X=x}}}.\]
Equivalently, $\ent(X)$ is the smallest $b$ for which $\Prob{}{X=x} \leq 2^{-b}$ for every $x$ in the support of $X$.
\end{definition}

\begin{definition}[Density]\label{def:dense}
A pair $(X,Y)$ of random variables over $\{0,1\}^{\ell \cdot n}$ is called {\em $\delta$-dense} if for all sets $I \subseteq [n]$, it holds that $\ent(X_I,Y_I) \geq \delta \cdot 2\ell |I|$.
\end{definition}

We further need the following proposition that was proved in~\cite{ChattopadhyayFK21}.
It says, roughly speaking, that if $g: \{0,1\}^\ell \times \{0,1\}^\ell \rightarrow \{0,1\}$ is a function with low discrepancy and $(X,Y)$ is a pair of independent random variables over $\{0,1\}^{\ell \cdot n}$ whose projection to the blocks of a set $S \subseteq [n]$ is sufficiently dense, then the distribution of $g^S(X_S,Y_S)$ is close to uniform.
A special case of this statement, for $g$ being the inner product function, was previously given in~\cite[Lemma~13]{GoosLMWZ16} (see also~\cite[Lemma~9]{Goos15}).

\begin{proposition}[{\cite[Proposition~3.10]{ChattopadhyayFK21}}]\label{prop:disc}
There exists an absolute constant $h$, such that for every $\eta > 0$ there exists $c>0$ for which the following holds.
Let $\ell$ and $n$ be integers such that $\ell \geq c \cdot \log_2 n$, and let $g: \{0,1\}^\ell \times \{0,1\}^\ell \rightarrow \{0,1\}$ be a function satisfying $\disc(g) \leq 2^{-\eta \cdot \ell}$. For any $\gamma >0$, let $S \subseteq [n]$ be a set, and let $X$ and $Y$ be independent random variables over $\{0,1\}^{\ell \cdot n}$, such that $(X_S,Y_S)$ is $\delta$-dense for $\delta \geq 1+\frac{1}{2}(\gamma-\eta + h/c)$.
Then, for every $a \in \{0,1\}^{|S|}$, it holds that
\[ \Big | \Prob{}{g^S(X_S,Y_S) = a} -2^{-|S|} \Big | \leq 2^{-|S|} \cdot 2^{- \gamma \cdot \ell}.\]
\end{proposition}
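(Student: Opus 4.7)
The plan is to establish the target pointwise probability bound via a Fourier expansion that reduces the claim to bounding blockwise ``bias'' expectations, and then to control those expectations using the discrepancy hypothesis together with a decomposition of sufficiently dense distributions into mixtures of blockwise product rectangles. Setting $\chi(b) = (-1)^b$ and $\Delta_T = \expectation[\prod_{i \in T}\chi(g(X_i,Y_i))]$ for $T \subseteq S$, expanding $\mathbb{1}[g^S(X_S,Y_S)=a]$ in the Walsh basis yields
\[
\Prob{}{g^S(X_S,Y_S)=a} \;-\; 2^{-|S|} \;=\; 2^{-|S|} \sum_{\emptyset \ne T \subseteq S} (-1)^{\sum_{i \in T} a_i}\, \Delta_T,
\]
so the target reduces to showing $\sum_{\emptyset \ne T \subseteq S} |\Delta_T| \le 2^{-\gamma\ell}$.

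Next I would prove a single-block discrepancy bound: if $(U,V)$ is a product distribution on $\{0,1\}^\ell \times \{0,1\}^\ell$ with $\ent(U)+\ent(V) \ge (2-\alpha)\ell$, then flattening $U$ and $V$ as mixtures of uniforms on sets $A$ and $B$ of sizes $2^{\ent(U)}$ and $2^{\ent(V)}$ respectively, and invoking the definition of discrepancy on each resulting rectangle, gives
\[
|\expectation[\chi(g(U,V))]| \;\le\; \frac{2^{2\ell}\cdot \disc(g)}{|A|\cdot|B|} \;\le\; 2^{(\alpha-\eta)\ell}.
\]
In particular, whenever $\alpha \le \eta - \gamma - h/c$, the single-block bias is at most $2^{-(\gamma+h/c)\ell}$.

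The main step is bounding $|\Delta_T|$ for $T \subseteq S$ with $|T|=k$. Since $X$ and $Y$ are independent, $(X_T, Y_T)$ is a product distribution, and the density hypothesis gives $\ent(X_T) + \ent(Y_T) = \ent(X_T, Y_T) \ge \delta \cdot 2\ell k$. I would decompose $(X_T, Y_T)$ as a convex combination of distributions that are uniform on \emph{blockwise} product rectangles $\prod_{i \in T}(A_i \times B_i)$, with each $|A_i|\cdot|B_i| \ge 2^{(2-\alpha)\ell}$ and $\alpha \le \eta - \gamma - h/c$, up to an exceptional component of negligible total mass. On each such blockwise flat component, $\Delta_T$ factorises across $i \in T$ into a product of per-block biases, each bounded above by $2^{-(\gamma+h/c)\ell}$, yielding $|\Delta_T| \le 2^{-(\gamma+h/c)\ell k}$ plus a negligible error. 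Summing over nonempty $T$ and using $|S| \le n$ together with $\ell \ge c\log_2 n$, the factor $\binom{|S|}{k}\le n^k \le 2^{(k/c)\ell}$ is absorbed by $2^{-(h/c)\ell k}$ once $c$ is large enough, so $\sum_k \binom{|S|}{k}\,2^{-(\gamma+h/c)\ell k} \le 2^{-\gamma\ell}$ as required.

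The main obstacle is the blockwise decomposition step. The density hypothesis only constrains the joint entropy $\ent(X_T, Y_T)$, and a generic high-min-entropy distribution on $(\{0,1\}^\ell)^k$ need not decompose as a mixture of \emph{per-block} product distributions without loss — the crucial distinction between Shannon entropy, which obeys a chain rule in expectation, and min-entropy, which does not. The resolution is a rounding / typical-fiber argument in which one ``apportions'' the total entropy across the $k$ blocks by considering the block-by-block conditional distributions, truncating atypically heavy fibers, and rounding the surviving conditional distributions to uniforms on product sets. This incurs an $O(\log n / \ell)$ entropy deficit per block together with a small probability of bad conditionings, both of which are precisely what the $h/c$ slack in the hypothesis $\delta \ge 1 + \tfrac{1}{2}(\gamma - \eta + h/c)$ is tuned to absorb; the $(\gamma-\eta)/2$ half of the slack powers the per-block discrepancy bound, while the $h/c$ half finances the rounding loss and the union bound over the at most $2^{|S|}$ Fourier terms.
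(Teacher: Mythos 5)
First, a point of reference: the paper does not prove this proposition at all --- it is imported verbatim from~\cite{ChattopadhyayFK21} (their Proposition~3.10), so there is no in-paper proof to compare against; I can only assess your plan on its own merits. Your Fourier reduction is fine (it is the standard first step, as in~\cite{GoosLMWZ16}), and so is the single-block estimate: flattening a high-min-entropy product pair into uniforms on sets $A,B$ and using $\bigl|\expectation[(-1)^{g(U,V)}]\bigr| \leq 2^{2\ell}\disc(g)/(|A|\,|B|)$ is exactly the right use of the discrepancy hypothesis. The problem is the step you yourself flag as the main obstacle: the claim that a $\delta$-dense pair $(X_T,Y_T)$ decomposes (even approximately, even after discarding negligible mass) into a mixture of \emph{blockwise} product rectangles $\prod_{i\in T}(A_i\times B_i)$ with every $|A_i|\,|B_i|\geq 2^{(2-\alpha)\ell}$. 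This is false. The density hypothesis only controls joint min-entropies $\ent(X_I,Y_I)$; it permits arbitrary cross-block correlations inside $X$ alone. Take $|T|=2$, $Y$ uniform, and $X_T$ uniform on $\{(x_1,x_2): \ip{x_1,x_2}=0 \bmod 2\}$. Then $(X_T,Y_T)$ is $\delta$-dense with deficit about one bit (so $\delta$ can be taken as close to $1$ as the proposition ever requires), yet any rectangle $A_1\times A_2$ contained in the support of $X_T$ satisfies $|A_1|\,|A_2|\leq 2^{\ell}$ by Lindsey's lemma, and any rectangle with $|A_1|,|A_2|\geq 2^{0.9\ell}$ places only about half of its mass inside that support. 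Hence $X_T$ is neither an exact mixture of per-block high-entropy products plus negligible junk (every mixture component must live inside the support) nor within statistical distance $1/2-o(1)$ of such a mixture. Your proposed repair does not escape this: conditioning block-by-block on ``typical fibers'' to manufacture per-block independence collapses the conditioned blocks to point masses, i.e., it trades exactly the min-entropy you need away, and no rounding/truncation of small mass can help when no large blockwise-product structure exists inside the support at all.

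So the actual content of the proposition --- bounding the multi-block bias $\Delta_T$ for dense but cross-block-correlated $X$ and $Y$ --- is not established by your plan. For the inner-product gadget this is exactly where \cite{GoosLMWZ16} exploit a special algebraic feature: $\bigoplus_{i\in T}\ip{x_i,y_i}$ is again an inner product of the concatenated strings $x_T,y_T$, so one application of Lindsey's lemma to the \emph{total} min-entropy $\ent(X_T)+\ent(Y_T)$ suffices, with no per-block factorization; for general low-discrepancy gadgets, getting a bound of the form $|\Delta_T|\leq 2^{-\Omega(\ell|T|)}$ is precisely the technical contribution of~\cite{ChattopadhyayFK21}, and it requires a genuinely different argument from a blockwise decomposition. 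A secondary issue: even granting your decomposition, the accounting has no slack --- you need per-block entropy $(2-\eta+\gamma+h/c)\ell$ to get per-block bias $2^{-(\gamma+h/c)\ell}$, while the density hypothesis supplies on average exactly $2\delta\ell=(2+\gamma-\eta+h/c)\ell$ per block, and the $h/c$ term is already fully spent absorbing $\binom{|S|}{k}\leq n^k\leq 2^{k\ell/c}$; there is nothing left to pay for the rounding losses and exceptional mass your decomposition would incur.
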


Equipped with Proposition~\ref{prop:disc}, we are ready to prove Theorem~\ref{thm:lift}.

\begin{proof}[ of Theorem~\ref{thm:lift}]
Fix $\eta >0$. For some $c>0$ to be determined later, let $\ell$ and $n$ be two integers such that $\ell \geq c \cdot \log_2 n$, and let $g: \{0,1\}^\ell \times \{0,1\}^\ell \rightarrow \{0,1\}$ be a function satisfying $\disc(g) \leq 2^{-\eta \cdot \ell}$. We may and will assume that $n \geq 2$. For a function $f : \{0,1\}^n \rightarrow \{0,1\}$, put $t = \mathsf{coNP^{cc}} ( f \circ g^n)$, and let $M$ denote the $2^{\ell \cdot n} \times 2^{\ell \cdot n}$ matrix associated with $f \circ g^n$. It follows that $\Rbool(\overline{M}) \leq 2^t$, hence there exists a cover $\Pi$ of the zeros of $M$ with at most $2^t$ monochromatic combinatorial rectangles.

Our goal is to show that for some $k \leq O(\frac{t}{\eta \cdot \ell})$ it holds that $\mathsf{C}_0(f) \leq k$, that is, the function $\neg f$ can be represented as a $k$-DNF formula. To do so, it suffices to show that for every $z \in \{0,1\}^n$ satisfying $f(z)=0$, there exists a set $I \subseteq [n]$ of size $|I| \leq k$ such that all vectors $z' \in \{0,1\}^n$ with $z'_I = z_I$ are mapped by $f$ to $0$. Indeed, for every such $z$ and $I$, one can define a conjunction with $|I|$ literals which forms an indicator for the vectors that agree with $z$ on the variables of $I$. The disjunction of all of these conjunctions is an $n$-variate $k$-DNF formula that precisely computes $\neg f$, as required.

Fix a vector $z \in \{0,1\}^n$ satisfying $f(z)=0$. Let $(X,Y)$ be the random variable uniformly distributed over the set
\[ (g^n)^{-1}(z) = \Big \{ (x,y) \in \{0,1\}^{\ell \cdot n} \times \{0,1\}^{\ell \cdot n} \Bigm | g^n(x,y)=z \Big \}.\]
Observe that the random variables $(X_i,Y_i)$ for $i \in [n]$ are independent and that each of them is uniformly distributed over either $g^{(-1)}(0)$ or $g^{(-1)}(1)$.
The assumption $\disc(g) \leq 2^{-\eta \cdot \ell}$ implies that the discrepancy of $g$ on the rectangle $\{0,1\}^{\ell} \times \{0,1\}^{\ell}$ does not exceed $2^{-\eta \cdot \ell}$, hence
\[\big ||g^{-1}(0)| - |g^{-1}(1)| \big | \leq 2^{(2-\eta) \cdot \ell}.\]
This implies that
\[\min \big (|g^{-1}(0)|,|g^{-1}(1)| \big ) \geq 2^{2\ell-1} - 2^{(2-\eta) \cdot \ell-1} \geq 2^{2\ell-2},\]
where the second inequality holds for $\ell \geq c \cdot \log_2 n$ assuming that $c \geq 1/\eta$.
It thus follows that for every set $I \subseteq [n]$, it holds that
\begin{eqnarray}\label{eq:ent_XIYI}
\ent(X_I,Y_I)  = \sum_{i \in I}{\ent(X_i,Y_i)} \geq |I| \cdot \log_2 (2^{2\ell-2}) = |I| \cdot (2\ell-2).
\end{eqnarray}

By $f(z)=0$, the entries of $(g^n)^{-1}(z)$ in $M$ are all zeros. Since $\Pi$ is a cover of the zeros in $M$ with at most $2^t$ rectangles, there must exist a rectangle $R \in \Pi$ that covers at least $2^{-t}$ fraction of the entries of $(g^n)^{-1}(z)$. Let $(X',Y')$ be the random variable uniformly distributed over $(g^n)^{-1}(z) \cap R$.
Note that for every $I \subseteq [n]$, the random variable $(X'_I,Y'_I)$ is obtained from $(X_I,Y_I)$ by conditioning it on the event $(X,Y) \in R$, whose probability is at least $2^{-t}$.
It thus follows, using~\eqref{eq:ent_XIYI}, that for every $I \subseteq [n]$,
\begin{eqnarray}\label{eq:ent_X'IY'I}
\ent(X'_I,Y'_I) \geq \ent(X_I,Y_I) -t \geq |I| \cdot (2\ell-2)-t.
\end{eqnarray}

The following lemma shows that by fixing relatively few blocks in $(X',Y')$, one can get a random variable that is quite dense on the remaining blocks (recall Definition~\ref{def:dense}).
\begin{lemma}\label{lemma:dense}
For every $\delta < 1-\frac{1}{\ell}$, there exist a set $I \subseteq [n]$ of size $|I| \leq \frac{t}{2\cdot ((1-\delta)\ell-1)}$ and an assignment $\alpha \in \{0,1\}^{2 \cdot \ell |I|}$ for which the random variable $(X'',Y'')$ obtained from $(X',Y')$ by conditioning it on the event $(X'_I,Y'_I)=\alpha$ satisfies that its projection $(X''_{\overline{I}},Y''_{\overline{I}})$ to the blocks of $\overline{I} = [n] \setminus I$ is $\delta$-dense.
In addition, letting $X'''$ and $Y'''$ be independent copies of $X''$ and $Y''$ respectively, the random variable $(X'''_{\overline{I}},Y'''_{\overline{I}})$ is $(2\delta-1)$-dense.
\end{lemma}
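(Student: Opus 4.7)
The plan is to choose $I$ via a potential-function argument. I would define the potential
$\Psi(I) = \ent(X'_I, Y'_I) - \delta \cdot 2\ell |I|$
on subsets $I \subseteq [n]$, and take $I$ to be a minimizer of $\Psi$. Since $\Psi(\emptyset) = 0$, we have $\Psi(I) \leq 0$, i.e.\ $\ent(X'_I, Y'_I) \leq \delta \cdot 2\ell |I|$. Combining this with the lower bound $\ent(X'_I, Y'_I) \geq |I|(2\ell - 2) - t$ from~\eqref{eq:ent_X'IY'I} gives $|I| \cdot 2((1-\delta)\ell - 1) \leq t$, which is exactly the bound claimed; note that the hypothesis $\delta < 1 - 1/\ell$ is precisely what makes the denominator positive and the argument meaningful.

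For the assignment $\alpha$, I would pick a mode of $(X'_I, Y'_I)$, i.e.\ a value for which $\Pr[(X'_I, Y'_I) = \alpha] = 2^{-\ent(X'_I, Y'_I)}$ (such $\alpha$ exists by definition of min-entropy as a maximum over point probabilities). The key step is then to exploit minimality of $\Psi$: for every $J \subseteq \overline{I}$, $\Psi(I \cup J) \geq \Psi(I)$ rearranges to $\ent(X'_{I \cup J}, Y'_{I \cup J}) \geq \ent(X'_I, Y'_I) + \delta \cdot 2\ell |J|$. For any $(x,y) \in \{0,1\}^{\ell |J|} \times \{0,1\}^{\ell |J|}$, the conditional probability $\Pr[X''_J = x, Y''_J = y]$ equals the ratio between $\Pr[X'_{I \cup J} = (\alpha^X, x), Y'_{I \cup J} = (\alpha^Y, y)]$ and $\Pr[(X'_I, Y'_I) = \alpha]$. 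Upper-bounding the numerator by $2^{-\ent(X'_{I \cup J}, Y'_{I \cup J})}$ and plugging in the exact value of the denominator produces exactly the bound $\Pr[X''_J = x, Y''_J = y] \leq 2^{-\delta \cdot 2\ell|J|}$, yielding $\delta$-density.

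For the second statement, I would deduce marginal min-entropy bounds from the joint one: the trivial estimate $\Pr[X''_J = x] = \sum_y \Pr[X''_J = x, Y''_J = y] \leq 2^{\ell|J|} \cdot 2^{-2\delta\ell|J|}$ gives $\ent(X''_J) \geq (2\delta-1)\ell|J|$, and similarly for $Y''_J$. Since $X'''$ and $Y'''$ are independent with the respective marginals of $X''$ and $Y''$, multiplying the two marginal bounds gives $\Pr[X'''_J = x, Y'''_J = y] \leq 2^{-(2\delta-1) \cdot 2\ell|J|}$, as required.

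The only mildly subtle point is ensuring that min-entropy (rather than Shannon entropy) is compatible with the potential-function manipulation. This works cleanly precisely because $\ent$ is defined as a max over point probabilities: choosing the mode $\alpha$ saturates $\Pr[(X'_I, Y'_I) = \alpha] = 2^{-\ent(X'_I, Y'_I)}$ exactly, and combined with the universal upper bound $\Pr[\cdot] \leq 2^{-\ent(\cdot)}$ on the numerator, the cancellation of $2^{-\ent(X'_I, Y'_I)}$ in the ratio is lossless. Beyond this observation the argument is essentially bookkeeping.
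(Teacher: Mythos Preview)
Your argument is correct and is essentially the same density-restoration argument as the paper's; the only cosmetic difference is that the paper selects $I$ as a \emph{maximum-size} set violating $\delta$-density (and then picks any $\alpha$ with $\Pr[(X'_I,Y'_I)=\alpha] > 2^{-\delta\cdot 2\ell|I|}$), whereas you select $I$ as a minimizer of the potential $\Psi$ and take $\alpha$ to be an exact mode. Both choices yield the needed inequality $\ent(X'_{I\cup J},Y'_{I\cup J}) \geq \ent(X'_I,Y'_I) + \delta\cdot 2\ell|J|$ (respectively via maximality and via minimality), and your treatment of the second part is identical to the paper's up to phrasing.
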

\begin{proof}
Fix an arbitrary $\delta < 1-\frac{1}{\ell}$.
If the random variable $(X',Y')$ is $\delta$-dense, then the choice $I = \emptyset$ clearly satisfies the assertion of the first part of the lemma.
Otherwise, $(X',Y')$ is not $\delta$-dense, so there exists a set $I \subseteq [n]$ for which $\ent(X'_I,Y'_I) < \delta \cdot 2\ell |I|$. Let $I$ be such a set with maximum size. By~\eqref{eq:ent_X'IY'I}, we obtain that
\[ |I| \cdot (2\ell-2)-t \leq \ent(X'_I,Y'_I) < \delta \cdot 2 \ell |I|,\]
which implies, using $\delta < 1-\frac{1}{\ell}$, that $|I| \leq \frac{t}{2\cdot ((1-\delta)\ell-1)}$.

It follows from $\ent(X'_I,Y'_I) < \delta \cdot 2 \ell |I|$ that there exists an $\alpha \in \{0,1\}^{2 \cdot \ell |I|}$ for which the probability that $(X'_I,Y'_I)=\alpha$ is larger than $2^{-\delta \cdot 2\ell |I|}$.
Let $(X'',Y'')$ be the random variable obtained from $(X',Y')$ by conditioning it on the event $(X'_I,Y'_I)=\alpha$.
We claim that its projection $(X''_{\overline{I}},Y''_{\overline{I}})$ to the blocks of $\overline{I}$ is $\delta$-dense.
To see this, suppose in contradiction that there exists a non-empty set $J \subseteq \overline{I}$ and an assignment $\beta \in \{0,1\}^{2 \cdot \ell |J|}$ for which the probability that $(X''_J,Y''_J)=\beta$ is larger than $2^{-\delta \cdot 2\ell |J|}$. It thus follows that the probability that $(X'_I,Y'_I) = \alpha$ and $(X'_J,Y'_J) = \beta$ is larger than $2^{-\delta \cdot 2\ell |I|} \cdot 2^{-\delta \cdot 2\ell |J|} = 2^{-\delta \cdot 2\ell |I \cup J|}$, hence the set $I \cup J$ violates the $\delta$-density of $(X',Y')$ and contradicts the maximality of $I$.

Now, let $X'''$ and $Y'''$ be independent copies of $X''$ and $Y''$ respectively.
We turn to show that the random variable $(X'''_{\overline{I}},Y'''_{\overline{I}})$ is $(2\delta-1)$-dense.
To see this, fix any $J \subseteq \overline{I}$, and observe that
\[\ent(X'''_{J}) \geq \ent(X''_J,Y''_{J}) - \ent(Y''_J) \geq \delta \cdot 2\ell |J| - \ell |J| = (2\delta-1) \cdot \ell |J|.\]
Similarly, we have $\ent(Y'''_{J}) \geq (2\delta-1) \cdot \ell |J|$. We derive that
\[\ent(X'''_{J},Y'''_{J}) = \ent(X'''_{J}) + \ent(Y'''_{J}) \geq (2\delta-1) \cdot 2\ell |J|,\]
which implies that $(X'''_{\overline{I}},Y'''_{\overline{I}})$ is $(2\delta-1)$-dense, as desired.
\end{proof}

We turn to apply Proposition~\ref{prop:disc}.
Put $\gamma = 1/\ell$.
For the given $\eta > 0$, define
\[\delta = 1+\frac{1}{4} \cdot \Big (\gamma-\eta+\frac{h}{c} \Big ),\]
where $h$ is the constant given in the proposition.
The assumption $\ell \geq c  \cdot \log_2 n \geq c$ implies, for a sufficiently large $c$, say $c > \max \big ( 2 \cdot (h+1),9 \big ) \cdot \eta^{-1}$, that
\begin{eqnarray}\label{eq:delta}
\delta \leq 1+ \frac{1}{4} \cdot \Big ( -\eta+\frac{h+1}{c} \Big ) < 1 - \frac{\eta}{8} < 1- \frac{1}{c} \leq 1- \frac{1}{\ell}.
\end{eqnarray}
By~\eqref{eq:delta}, we can apply Lemma~\ref{lemma:dense} with the above $\delta$.
Let $I \subseteq [n]$ and $\alpha \in \{0,1\}^{2 \cdot \ell |I|}$ be the set and assignment given by the lemma for this $\delta$, and let $(X'',Y'')$ and $(X''',Y''')$ be the corresponding random variables.
Using the inequality $\delta < 1- \frac{\eta}{8}$ that follows from~\eqref{eq:delta}, we obtain from Lemma~\ref{lemma:dense} that $|I| \leq O(\frac{t}{\eta \cdot \ell})$ and that the random variable $(X'''_{\overline{I}},Y'''_{\overline{I}})$ is $(2\delta-1)$-dense. Notice that
\[ 2\delta-1 = 1+\frac{1}{2} \cdot \Big (\gamma-\eta+\frac{h}{c} \Big ). \]
This allows us to apply Proposition~\ref{prop:disc} with the set $S = \overline{I}$ and to obtain, assuming that $c = c(\eta)$ is sufficiently large, that for every $a \in \{0,1\}^{|S|}$,
\[ \Big | \Prob{}{g^S(X'''_S,Y'''_S) = a} -2^{-|S|} \Big | \leq 2^{-(|S|+1)}.\]
This in particular yields that the random variable $g^n(X''',Y''')$ has full support on the entries of $\overline{I}$.

It remains to show that for every $z' \in \{0,1\}^n$ that satisfies $z'_I = z_I$, it holds that $f(z')=0$.
Let $R'$ be the rectangle of the matrix $M$ whose rows and columns are the supports of $X'''$ and $Y'''$ respectively.
Since the rows and columns of $R'$ are also rows and columns of $R$, it follows that $R' \subseteq R$, hence all of its pairs are mapped by $f \circ g^n$ to zero.
By construction, the pairs $(x,y) \in R'$ satisfy $(x_I,y_I) = \alpha$, and it holds that $z \in g^n(R')$.
Since the random variable $g^n(X''',Y''')$ has full support on the entries of $\overline{I}$, it follows that for every vector $z' \in \{0,1\}^n$ with $z'_I = z_I$, there exists a pair $(x,y) \in R'$ such that $g^n(x,y)=z'$.
Since the pairs of $R'$ are mapped by $f \circ g^n$ to zero, we get that $(f \circ g^n) (x,y) = f(g^n (x,y)) = f(z') = 0$, and we are done.
\end{proof}

\section{Proof of Claim~\ref{claim:bp_1}}\label{appendix:claim}

\begin{proof}[ of Claim~\ref{claim:bp_1}]
Let $(A_1,B_1), \ldots, (A_k,B_k)$ be the $k$ bicliques of the $t$-biclique covering $\calC$ of $H$.
By definition, every edge of $E'$ is covered by exactly $t$ of the bicliques of $\calC$.
Consider the function that maps every such edge  $e = \{u,v\} \in E'$ to a label $L = (i_1,\ldots,i_t,P)$,
where $i_1<\cdots<i_t$ are the $t$ indices $i \in [k]$ for which the biclique $(A_{i},B_{i})$ covers the edge $e$, and $P = \{P_1,P_2\}$ is a partition of $[t]$ defined by $P_1 = \{j \in [t] \mid u \in A_{i_j}\}$ and $P_2 = \{j \in [t] \mid u \in B_{i_j}\}$. Note that the partition $P$ can be equivalently defined using the vertex $v$ rather than $u$.

We claim that for every label $L$, the edges of $E'$ that are mapped to $L$ form a biclique in $H'$, and that these bicliques are edge-disjoint.
To see this, suppose that two edges $\{u,v\},\{u',v'\} \in E'$ are mapped to the same label $L=(i_1,\ldots,i_t,P)$.
Then, the two edges are covered by all the bicliques $(A_{i_j},B_{i_j})$ with $j \in [t]$, and it can be assumed, without loss of generality, that $u$ and $u'$ belong to the same part in each of them.
This implies that these bicliques also cover the edges $\{u,v'\}$ and $\{u',v\}$.
Since $\calC$ is a $t$-biclique covering of $H$, it follows that these edges belong to $E'$ and are also mapped to the label $L$.
This implies that the edges of $E'$ that are mapped to $L$ form a biclique in $H'$.
Since the label of every edge in $E'$ is uniquely defined, every such edge is covered by exactly one of these bicliques.
It thus follows that the collection of bicliques associated with all possible labels forms a biclique partition of $H'$.
Since the number of labels is at most $(2k)^t$, it follows that $\bp(H') \leq (2k)^t$, as desired.
\end{proof}

\end{document}